\renewcommand{\b}{\beta}
\renewcommand{\th}{\theta}
\renewcommand{\l}{\lambda}
\newcommand{\n}{\nu}
\newcommand{\ph}{\phi}
\def\ph{\phi}
\def\md#1{\ \mbox{\rm(mod }{#1})}
\def\npp#1{N_{\ph}^+(#1)}
\def\ol{\overline}
\def\ph{\phi}
\newcommand{\Q}{{\mathbb Q}}
\newcommand{\Z}{{\mathbb Z}}
\newcommand{\N}{{\mathbb N}}
\newcommand{\F}{{\mathbb F}}
\def\md#1{\ \mbox{\rm(mod }{#1})}
\def\npp#1{N_{\ph}^+(#1)}
\newcommand{\aF}{\mathfrak a}
\newcommand{\pF}{\mathfrak p}
\newtheorem{theorem}{Theorem}[section]
\newtheorem{lemma}[theorem]{Lemma}
\newtheorem{corollary}[theorem]{Corollary}
\theoremstyle{definition}
\newtheorem{definitions}[theorem]{Definitions}
\theoremstyle{remark}
\newtheorem{example}[theorem]{Example}
\newtheorem{remarks}[theorem]{Remarks}
\newtheorem{examples}[theorem]{Examples}
\begin{document}
\title[]{On common index divisors and monogenity of  of the  nonic  number field defined by a trinomial   $x^9+ax+b$}
\textcolor[rgb]{1.00,0.00,0.00}{}
\author{  Hamid Ben Yakkou and  Pagdame Tiebekabe }\textcolor[rgb]{1.00,0.00,0.00}{}
\address{Faculty of Sciences Dhar El Mahraz, P.O. Box  1874 Atlas-Fes , Sidi mohamed ben Abdellah University,  Morocco}\email{beyakouhamid@gmail.com }
\address{University of Kara, Faculty of Sciences and techniques,   Department of Mathematics, P.O.BoX 43 Kara, Togo}\email{pagdame.tiebekabe@ucad.edu.sn}
\keywords{Monogenity, Power integral basis, Newton polygon, Theorem of Ore, prime ideal factorization, index of a number field, common idex divisor} \subjclass[2010]{11R04,
	11Y40, 11R21}
\maketitle
\vspace{0.3cm}
\begin{abstract}  
	Let $K $ be a nonic number field generated by   a complex   root $\th$ of a monic irreducible trinomial  $ F(x)= x^9+ax+b \in \Z[x]$, where $ab \neq 0$. Let $i(K)$ be the index of $K$.  A rational prime  $p$ dividing $ i(K)$ is called a prime common index divisor of $K$.  In this paper, for every rational  prime $p$, we give necessary and sufficient  conditions depending  only  $a$ and $b$ for which $p$ is a common index divisor of $K$. As application of our results we identify infinite parametric families  of non-monogenic nonic numbers fields defined by such trinomials.   At the end,	some numerical examples illustrating our theoretical results are given.
\end{abstract}
\maketitle
\section{Introduction and statements of results}
	Let $K $ be a number field generated by $\th$, a root of a monic irreducible polynomial $F(x)\in \Z[x]$ of degree $n$, and $\Z_K$  its ring of integers.  By \cite[Theorem 2.10]{Na}, the field $K$ has  at least one integral basis; $\Z_K$ is a free $\Z$-module of rank $n$.   For every $\eta \in \Z_K$ generating $K$, let  $(\Z_K: \Z[\eta])= |\Z_K/\Z[\eta]|$  be the index of $\Z[\eta] $ in $\Z_K$,  called the index of $\eta$. The ring $\Z_K$ is said to have a power integral basis if it admits  a $\Z$-basis $(1,\eta,\ldots,\eta^{n-1})$ for some $\eta\in \Z_K$; $\Z_K=\Z[\eta]$.  In a such case, the number field $K$ is said to be monogenic. Otherwise, $K$ is said not monogenic.
	Let $(1, \omega_2, \ldots, \omega_n)$ be an arbitrary integral basis of $K$. Then the discriminant of the linear form  $$l(x)= x_1+ x_2 \omega_2+ \cdots + x_n \omega_n$$ is equal to 
	\begin{eqnarray}\label{discdisc}
	\prod_{1 \le i < j \le n} (l^{(j)}(x)- l^{(i)}(x))^2 = I(x_2, \ldots, x_n)^2 \cdot D_K,
	\end{eqnarray} where $l^{(j)}(x), 1 \le j \le n$ the conjugates of $l(x)=l^{(1)}(x)$,   $x=(x_1, x_2, \ldots, x_n) \in \Z^n$,  $D_K$ is the discriminant of $K$,  and $I(x_2, \ldots, x_n)$  is a homogeneous form of degree $\frac{n(n-1)}{2}$ 
	in $n-1$ variables with coefficients in $\Z$ called the index form corresponding to the integral basis  $(1, \omega_2, \ldots, \omega_n)$ (see \cite[Lemma 1.4]{G19}).  Recall also that for any $\eta \in \Z_K$, it is well known by \cite[Proposition 2.13]{Na} that:   
	\begin{eqnarray}\label{indexdiscrininant}
	D(\th) = ( \Z_K :  \Z [\eta])^2 \cdot D_K,
	\end{eqnarray}
	where $D(\eta)$ is the discriminant of the minimal polynomial of $\th$.
	Representing any $\eta$ of $ \Z_K$ in the form $\th = x_1+x_2 \omega_2+ \cdots+x_n \omega_n$, and combining (\ref{discdisc}) and (\ref{indexdiscrininant}),  we have $ ( \Z_K : \Z [\eta]) = |I(x_2, \ldots, x_n)| $ (see \cite[Lemma 1.5]{G19}). So, we have to solve the following equation to decide about the monogenity of $K$, and to find all generators of power integral bases:
	\begin{eqnarray}\label{indexform}
	I(x_2, \ldots, x_n) = \pm 1 \,\,
	( \text{in}\,\, x_2, x_3, \ldots, x_n \in \Z). 
	\end{eqnarray}In this paper, $i(K)$ will denote  the  index of   $K$, defined as follows: 
\begin{equation}\label{Defi(K)}
 i(K) = \gcd \ \{(Z_K:\Z[\eta]) \,| \ \eta \in \mathbb{Z}_K  \, \mbox{and}\,  K=\Q(\eta)\}. 
\end{equation} 
A rational prime $p$ dividing $i(K)$ is called a common index divisor of $K$ (or nonessential discriminant divisor of $K$). Note that if $K$ is monogenic, for $i(K) = 1$. But, if $i(K)>1$; equivalently if there exist a rational  prime $p$ dividing $i(K)$,  then Equation $(\ref{indexform})$ has no integral solutions and so $K$ is not monogenic.  The canonical examples of monogenic number fields are quadratic and cyclotomic fields. Indeed, any quadratic field $K=\Q(\sqrt{d})$, with $d\in \Z$ and square free, has ring of integers given by: $$\Z_K=
\left
\{\begin{array}{ll} \Z[\sqrt{d}],& \mbox{if } d \equiv 2, 3 \md 4,\\
\Z[\frac{1+\sqrt{d}}{2}],&\mbox{if } d \equiv 1 \md 4.
\end{array}
\right.$$ Also, if $K=\Q(\xi_n)$ is any cyclotomic number field, where $n \in \N$ and $\xi_n$ is a primitive  nth root of unity, then $\Z_K = \Z[\xi_n]$.   The first non-monogenic number field was given by Dedekind in 1878. He showed that the cubic number field $\Q(\th)$ is not monogenic when $\th$ is a root of the polynomial $x^3-x^2-2x-8$ (cf. \cite[p.64]{Na}). More precisely, he showed that $2$ is a common index divisor of $K$ since it is split completely in $\Z_K$.  The problem of testing the monogenity of number fields and constructing power integral bases have been intensively studied. There are extensive   results regarding this problem. These results were  treated by using  different approachs. Ga\'{a}l, Gy\H{o}ry,  Peth\H{o}, Pohst,  Remete (cf. \cite{Gacta2, G19, Gacta1,  GRN, GR17,Gyoryrsurlespolynomes,Gyoryredecide, PP}) and their  research teams who succeeded to study the monogenity of several number fields, are  based in their approach on the  resolution of  index form equations. It was proved by   Gy\H{o}ry that   the index form equation  can have only many integral solutions for which he  gave effective bounds (see \cite{Gyoryrsurlespolynomes,Gyoryredecide,Gyoryrdiscriminant,Gyoryr1983}). In \cite{GP}, Ga\'{a}l,  Peth\H{o} and Pohst studied indices  of quartic number fields. As consequence of their results,  they computed minimal indices and all elements of minimal index in all totally real quartic number fields with Galois group $A_4$ and discriminant fields $<10^6$.   In \cite{DS}, Davis and Spearman studied the index of the quartic number  field defined by $x^4+ax+b$.   In\cite{Gacta1}, Ga\'{a}l and Gy\H{o}ry described an algorithm to solve  index form equations in quintic number fields  and they computed all generators of power integral bases in some totally real quintic number fields with Galois group $S_5$. In  \cite{Gacta2}, Bilu, Ga\'{a}l and Gy\H{o}ry   studied  the monogenity of  some  totally real sextic number fields with Galois group $S_6$. In \cite{GR17},  Ga\'al and Remete studied the monogenity of pure number  fields  $\Q(\sqrt[n]{m}),$ where  $3\leq n\leq 9$ and $m$ is square free. They  also showed in \cite{GRN} that for a square free rational integer  $m \equiv 2, 3  \md{4}$,  the octic number filed $\Q(i, \sqrt[4]{m})$ is not monogenic.   In \cite{ PP},   Peth\H{o} and  Pohst studied indices in multiquadratic number fields.  Also, in \cite{PethoZigler}, Peth\H{o} and Ziegler gave an efficient criterion to decide whether the maximal order of a biquadratic field has a unit power integral basis or not. For important theoretical results about the  monogenity of relative extensions, see  \cite{Gyoryrelative} by Gy\H{o}ry.     The books \cite{EG} by Evertse and   Gy\H{o}ry, and \cite{G19} by  Ga\'al  give detailed surveys on the discriminant, the index form theory and its applications, including related Diophantine equations and monogenity of number fields. Nakahara's research team   based on the existence of relative  power  integral bases of some special  sub-fields, they studied the monogenity of several number fields:   Ahmad,  Nakahara and  Husnine \cite{ANHN} proved that for a square free rational integer $m$, if $m \equiv 2, 3 \md 4$ and $m \not \equiv  \pm 1 \md 9, $ then the sextic pure field $K = \Q(\sqrt[6]{m})$ is monogenic. But it is not monogenic when $m\equiv 1\md{4}$, see \cite{AN} by Ahmad,  Nakahara and  Hameed. In \cite{Smtacta},  Smith studied the monogenity of radical extensions and  he  gave  sufficient conditions for a  Kummer extension to be not monogenic.  Recall that the polynomial $F(x)$ is said to be monogenic if $\Z_K = \Z[\th]$; that is  $(1, \th, \ldots, \th^{n-1})$ form  a power integral  basis  of $\Z_K$.  Note that the monogenity of  the polynomial $F(x)$ implies the monogenity of the field $K$. But, the converse is not true, because a number field defined by non-monogenic polynomial can be monogenic;  the non-monogenity of $F(x)$ does not imply the non-monogenity of $K$ (see Theorem 2.1  of  \cite{BFT1} by Ben Yakkou and El Fadil,  which gives infinite parametric monogenic number field defined by non-monogenic trinomials). Let us recall some result concerning the monogenity of trinomials.
In \cite{JW}, Jones and White gave classes  of monogenic trinomials
 with non square free discriminant. Also, in \cite{JP}, Jones and Phillips construct infinite families of  monogenic trinomials defined by $F_n(x)=x^n+a(m,n)x+b(m,n)$ where $a(m,n)x$ and $b(m,n)$ are certain prescribed forms in $m$. 
Recently, in 2021,   Ga\'al studied the monogenity of sextic number fields defined by $x^6+ax^3+b$ (see \cite{Ga21}). Based on the important works of \O. Ore, Gu\`{a}rdia,  Montes and  Nart about the application of   Newton polygon techniques  on the factorization of ideals of the ring $\Z_K$ into a product of powers of prime ideals (see \cite{EMN, Nar, Narprime, MN92, O}), several results obtained about the monogenity and indices of number fields defined by trinomials.
 For $x^5+a^3+b$ and $x^8+ax+b$, see respectively  \cite{Barxiv} and \cite{BATCA} by  Ben Yakkou. For $x^7+ax^3+b$, see  \cite{FKcom} by  El Fadil and Kchit. The goal of the present paper is to study  the index  of the  number field generated by a root  of a monic irreducible trinomial of type  $F(x)=x^9+ax+b$.  Recall that  in \cite{BFT1} Ben Yakkou and El Fadil studied the non-monogenity of number fields defined by $x^n+ax+b$. Their results are extended in \cite{BRM} by Ben Yakkou for number fields defined by $x^n+ax^m+b$. 
In what follows, let  $K $ be a number field generated by $\th$, a root of a monic irreducible trinomial $F(x)= x^9+ax+b \in \Z[x]$, where 
$ab \neq 0$ and $\Z_K$  its ring of integers. For every rational prime 
$p$ and any non-zero  $p$-adic integer $m$, $\n_p(m)$  denote the $p$-adic valuation of $m$; the highest power of $p$ dividing $m$, and $m_p := \frac{m}{p^{\n_p(m)}}$. Without loss of generality, for every prime $p$, we assume that \begin{eqnarray}\label{Hypothese}
\n_p(a)<8 \,\, \text{or} \,\, \n_p(b)<9.
\end{eqnarray}
To explain this,  suppose that $\n_p(a) \ge 8$ and $\n_p(b)\ge 9$. Let $\n_p(a)=8q_1+r_1$ and $\n_p(b)=9q_2+r_2,$ where $1 \le r_1 \le 7$ and $1 \le r_2 \le 8$. Let $q=\min\{q_1, q_2\}, \eta=\frac{\th}{p^q}, A=\frac{a}{p^{8q}}, B=\frac{b}{9^q},$ and $G(x)=x^9+Ax+B$. Then, the following hold:
\begin{enumerate}
\item  $\n_p(A)<8$  or $\n_p(B)<9$
\item $G(x)$ irreducible over $\Q$ and $G(\eta)=0$.
\item $K=\Q(\th)=\Q(\eta)$.
\end{enumerate}So, up to replace $F(x)$ by $G(x)$, the claim holds.\\
For the simplicity of notations, if $p\Z_K=\pF_1^{e_1}\cdots\pF_g^{e_g}$ is the   factorization of $p\Z_K$ into a product of powers of prime ideals in $\Z_K$ with residue degrees $f(\pF_i/p)=[\Z_K/\pF_i : \Z/p\Z]=f_i$, then we write $p\Z_K = [f_1^{e_1}, \ldots, f_g^{e_g}]$. It is also important  to  recall  that   the Fundamental Equality (see \cite[Theorem 4.8.5]{Co}) shows  that 
\begin{eqnarray}\label{FE}
\sum_{i=1}^{g}e_if_i=9=\deg(K).
\end{eqnarray}
 For four  integers $a, b, c$ and $ d$, by the notation $(a, b) \equiv   (c, d) \md{p}$, we mean $a \equiv c \md{p}$ and $b \equiv d \md{p}$. Also, $(a, b) \in S \md{p}$ means that $(a, b)$ equivalent  some element of $S$ modulo $p$.\\
Now, let us state our first main result which gives necessary and sufficient conditions for the divisibility of the index  $i(K)$ by $2$.
\begin{theorem}\label{p=2}   Let $a$ and $b$ be  two rational integers  such that $F(x)=x^9+ax+b \in \Z[x]$ is irreducible over $\Q$, and $K$  a number field generated by a complex root $\th$ of $F(x)$.	Then the form of the factorization of the ideal  $2\Z_K$ into a product of powers of prime ideals of $\Z_K$ is given in Table \ref{table1}. Furthermore, $2$ is common index divisor of $K$ if and only if one of the conditions $A4, A6, A8, A9, A10, A15, A17, A20$ hold.
\end{theorem}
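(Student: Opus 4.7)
The plan is to compute, for every $(a,b)\in\Z^2$, the factorization of $2\Z_K$ into prime ideals of $\Z_K$ and then to apply the classical criterion (Hensel--Engstrom) that a prime $p$ is a common index divisor of $K$ if and only if for some $f\ge 1$ the number of primes of $\Z_K$ of residue degree $f$ above $p$ strictly exceeds the number $N_p(f)$ of monic irreducible polynomials of degree $f$ in $\F_p[x]$. For $p=2$ we have $N_2(1)=2$, $N_2(2)=1$, $N_2(3)=2$ and $N_2(f)\ge 3$ for $f\ge 4$, so, with $[K:\Q]=9$, the only ways $2\mid i(K)$ can occur are: (a) at least three primes of residue degree $1$ above $2$, or (b) at least two primes of residue degree $2$ above $2$.

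I would first reduce $F$ modulo $2$ and split the analysis into the four classes $(a,b)\bmod 2$. If $(a,b)\equiv(0,1)\pmod 2$ then $\overline F=(x+1)(x^2+x+1)(x^6+x^3+1)$, a product of three distinct irreducibles. If $(a,b)\equiv(1,1)\pmod 2$ then $\overline F=x^9+x+1$, which is separable (a direct Euclidean computation with its derivative $x^8+1$ gives $\gcd=1$). In both of these classes $\overline F$ is separable, so $2\nmid(\Z_K:\Z[\th])$ and Dedekind's theorem yields the splitting type directly; since $\F_2$ admits only two linear polynomials and one irreducible quadratic, conditions (a) and (b) above cannot be met, and no common index divisor occurs. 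The remaining classes, $(a,b)\equiv(1,0)\pmod 2$ giving $\overline F=x(x+1)^8$, and $(a,b)\equiv(0,0)\pmod 2$ giving $\overline F=x^9$, are where all twenty listed subcases A1--A20 live and where Newton polygons are needed.

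For each of these two classes I would apply Ore's theorem in the Gu\`ardia--Montes--Nart framework~\cite{EMN,Nar,Narprime,MN92,O}. In the class $(a,b)\equiv(0,0)\pmod 2$, the $\phi$-Newton polygon attached to $\phi=x$ has vertices among $(0,\nu_2(b))$, $(1,\nu_2(a))$ and $(9,0)$, and I would stratify the $(\nu_2(a),\nu_2(b))$-plane (subject to the normalizing assumption~(\ref{Hypothese})) according to which vertex is dominant, reading off the residual polynomial attached to each side. In the class $(a,b)\equiv(1,0)\pmod 2$, I would shift $\th\mapsto\th-1$ (equivalently use $\phi=x+1$) and Taylor-expand $F(x-1)=(x-1)^9+a(x-1)+b=\sum_{k=0}^{9}c_k x^k$; the principal $(x+1)$-Newton polygon then governs the primes of $\Z_K$ above $2$ lying over $\phi=x+1$, while the simple factor $x$ always contributes exactly one unramified prime of residue degree $1$. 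Whenever a residual polynomial attached to a side is inseparable, I would pass to a second-order Newton polygon, so that in each case a definitive factorization type $2\Z_K=[f_1^{e_1},\ldots,f_g^{e_g}]$ emerges and populates a row of Table~\ref{table1}.

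Once Table~\ref{table1} is established, the common-index-divisor statement is a mechanical verification: enumerate, in each row, the number of primes of residue degree $1$ and the number of residue degree $2$, and keep only the rows that satisfy condition (a) or (b) above; the claim is that these are precisely A4, A6, A8, A9, A10, A15, A17, A20. The main obstacle is the combinatorial explosion in the class $(a,b)\equiv(1,0)\pmod 2$: the $(x+1)$-adic expansion produces coefficients involving quantities such as $a+b-1$, $a-1$ and the inner binomial coefficients $\binom{9}{k}$, whose $2$-adic valuations (combined with that of $b$) cut the parameter space into many strata; a handful of these strata further force an inseparable first-order residual polynomial and therefore require a second-order Newton polygon before the residue degrees and ramification indices can be read off. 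Managing this bookkeeping so that the twenty subcases partition the parameter space without overlap is the essential technical difficulty.
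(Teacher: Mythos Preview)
Your proposal follows essentially the same route as the paper: reduce modulo $2$, dispose of the two separable classes by Dedekind, and in the two non-separable classes apply Ore's theorem via first-order (and, where a residual polynomial is inseparable, second-order) Newton polygons in the Gu\`ardia--Montes--Nart framework, then read off the common-index-divisor cases from the Engstrom criterion. Two small slips to correct: cases A1 and A2 are precisely the two separable classes $(a,b)\equiv(1,1),(0,1)\pmod 2$ (so not all twenty subcases live in the non-separable classes), and for the lift $\phi=x-1$ the governing Taylor coefficients are $F(1)=1+a+b$ and $F'(1)=9+a$, not $a+b-1$ and $a-1$.
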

\begin{table}[h!]
	\centering
	\begin{tabular} { | c | c | c |  }
		\hline
		Case & Conditions & Factorization of $2\Z_K$  \\
		\hline
		A1 & $ab \equiv 1 \md{2}$ & $[9^1]$  \\
		\hline
		A2 & $a \equiv 0 \md{2}$ and $b \equiv 1 \md{2}$ & $[1, 2, 6]$  \\
		\hline
		A3 &   $(a,b) \in \{(1, 0), (3, 2)\} \md{4}$ & $[1, 1^8]$  \\
		
		\hline
		A4 &   $a \equiv 1 \md{4}$ and $b \equiv 2 \md{4}$ & $[1, 1, 1^7]$  \\
		\hline
		A5 &   $(a,b) \in \{(3, 0), (7, 4)\} \md{8}$ & $[1, 2^4]$  \\
		\hline
		A6 &   $a \equiv 3 \md{8}$ and $b \equiv 4 \md{4}$ & $[1, 1, 1^3,  1^4]$  \\
		
		\hline
		A7 &   $(a,b) \in \{(7, 0), (15, 8)\} \md{16}$ & $[1, 2^2, 1^4]$  \\
		\hline
		A8 &   $(a,b) \in \{(15, 0), (31, 16)\} \md{32}$ & $[1, 2, 1^2,  1^4]$  \\
		\hline
		A9 &   $(a,b) \in \{(15, 16), (31, 0)\} \md{32}$ & $[1, 1, 1,  1^2,  1^4]$  \\
		\hline
		A10 &   $a \equiv 7 \md{16}$ and $b \equiv 8 \md{16}$ & $[1, 1, 1, 1^2, 1^4], [1, 1^2, 1^2, 1^4],$    \\
		&   & or $[1, 2, 1^2, 1^4]$  \\
		\hline
		A11 & $\n_2(b) \in\{1, 2, 4, 5, 7, 8\}$ and  $9
		\n_2(a)>8\n_2(b)$ & $[1^9]$  \\
		\hline
		A12 & $\n_2(b) \in \{3, 6\}$ and  $9 \n_2(a)>8\n_2(b)$ & $[1^3, 2^3]$  \\
		\hline
		A13 &   $\n_2(a) \in\{1, 3,  5, 7\}$ and  $9 \n_2(a)<8\n_2(b)$ & $[1, 1^8]$  \\
		\hline
		A14 &   $\n_2(a)=2$ and  $\n_2(b) \in \{3, 4\}$ & $[1, 1^8]$  \\
		\hline
		A15 &   $\n_2(a)=2$ and  $\n_2(b)\ge 5$ & $[1, 1^4, 1^4]$  \\
		\hline
		A16 &   $\n_2(a)=6$ and  $\n_2(b)\in \{7, 8\}$ & $[1, 1^8]$  \\
		\hline
		A17 &   $\n_2(a)=6$ and  $\n_2(b)\ge 9$ & $[1, 1^4, 1^4]$  \\
		\hline
		A18 &   $\n_2(a)=4$ and  $\n_2(b)\in \{5, 6\}$ & $[1, 1^8]$  \\
		\hline
		A19 &   $\n_2(a)=4$ and  $\n_2(b)=7$ & $[1^9]$  \\
		\hline
		A20 &   $\n_2(a)=4$ and  $\n_2(b) \ge 8$ & $[1, 1^2, 1^2, 1^2, 1^2], [1, 1^2, 1^2, 1^4],$  \\
		&    & or $[1, 1^2, 1^2, 2^4]$  \\
		\hline
	\end{tabular}
	\caption{Factorization of $2\Z_K$}
	\label{table1}
\end{table}
The following theorem gives necessary and sufficient conditions for which $3$ divides $i(K)$.
\begin{theorem}\label{p=3} Let $K=\Q(\th)$ be a number field with $\th$ a root of a monic irreducible polynomial $F(x)=x^9+ax+b$.  Let $\n=\n_3(1+a+b),$ $\mu=\n_3(9+a)$ and $\omega= \n_3(-1-a+b)$. 
	 Then the form of the factorization of the ideal  $3\Z_K$ into a product of powers of prime ideals of $\Z_K$ is given in Table \ref{table3}.  Furthermore, $3$ is common index divisor of $K$ if and only if one of the conditions  $C28, C30, C32, C42, C46, C48$ hold.
	\end{theorem}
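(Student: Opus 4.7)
The overall strategy is to determine the exact factorization shape of $3\Z_K$ in terms of $a$ and $b$, and then appeal to the classical theorem of Hensel on common index divisors: a rational prime $p$ belongs to $i(K)$ if and only if there exists a positive integer $f$ for which the number of prime ideals of $\Z_K$ above $p$ with residue degree $f$ strictly exceeds the number of monic irreducible polynomials of degree $f$ in $\F_p[x]$. For $p=3$ this threshold equals $3$ for $f=1$ and for $f=2$, so the only factorization patterns of $3\Z_K$ with $\sum e_i f_i = 9$ that can force $3 \mid i(K)$ are those with at least four distinct primes of residue degree one or at least four distinct primes of residue degree two. Thus the task reduces to classifying, for each admissible $(a,b)$, which such pattern occurs.

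To obtain $3\Z_K$ I would run the Newton polygon machinery of Ore, together with its second-order Montes-type refinement when needed, on $F(x)$ at $p=3$. The factorization of $\overline{F}(x) \in \F_3[x]$ divides the analysis into several main branches: when $\overline{F}$ is separable Dedekind's theorem reads off $3\Z_K$ directly from the irreducible factors of $\overline F$; when $3 \mid b$ and $3 \nmid a$, one has $\overline{F} = x(x^8 + \overline{a})$ and the Newton polygon of $F$ with respect to $x$ must be examined; when $3 \mid a$ and $3 \nmid b$, one has $\overline{F} \equiv (x \mp 1)^9 \bmod 3$ and the analysis must be carried out after the translation $y = x \mp 1$; a fourth branch arises when $a \equiv -1 \bmod 3$ together with $3 \mid b$, where $\overline F$ has all three linear factors $x$, $x-1$, $x+1$ and each of them may contribute further lifting. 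The parameters $\n, \mu, \omega$ of the theorem are precisely the $3$-adic valuations of the constant and linear terms of the translates $F(y+1)$ and $F(y-1)$, and the shape of the polygon at $\pm 1$ hinges on how these compare to the known $3$-adic valuations of the binomial coefficients $\binom{9}{k}$ for $k = 2, \ldots, 9$.

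For each case I would read off the sides of the polygon, record the residual polynomial of every side, and invoke Ore's theorem to enumerate the corresponding prime ideals together with their ramification indices and residue degrees. Whenever a residual polynomial fails to be separable I would resort to a second-order Newton polygon, or the Montes refinement, to resolve the ambiguity. The main technical obstacle I foresee is the combinatorial explosion produced by the translates at $y = \pm 1$: the polygon at each such point can have one, two, or three sides depending on the ordering of $\n, \mu, \omega$ against the binomial-coefficient valuations, and several residual polynomials are expected to require a genuine second-order refinement. The bookkeeping between these nested cases and the conditions $C1, \ldots, C48$ of Table \ref{table3} is the delicate part of the argument. Once the shape of $3\Z_K$ is pinned down in every case, identifying which factorization patterns violate Hensel's bounds — four or more primes of residue degree $1$, or four or more of residue degree $2$ — is immediate, and is expected to single out exactly the conditions $C28, C30, C32, C42, C46, C48$ asserted by the theorem.
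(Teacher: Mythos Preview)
Your proposal is essentially correct and follows the same overall architecture as the paper: Ore's Newton polygon method to determine the splitting type of $3\Z_K$ case by case, followed by the Hensel/Dedekind criterion (Lemma~\ref{comindex}) to decide whether $3\mid i(K)$. Your identification of the key translates at $x=\pm 1$ and of the parameters $\nu,\mu,\omega$ as the relevant valuations matches the paper exactly.

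There is one genuine methodological difference worth noting. You plan to invoke second-order Newton polygons (the Montes refinement) whenever a first-order residual polynomial fails to be separable. The paper never does this for $p=3$: in every non-regular case (e.g.\ C22, C24, C25, C28, C50--C54) it instead \emph{replaces the lift} $\phi_1(x)=x\mp 1$ by a better-chosen lift such as $\psi_1(x)=x+b$ or $\psi_1(x)=x\pm a_3$, which makes $F$ already $\psi_1$-regular in first order. Both routes are valid; the lift-switching trick is lighter in machinery and keeps the entire $p=3$ argument at first order, whereas your approach is more systematic but requires carrying the second-order apparatus through several subcases.

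One small inaccuracy in your branch analysis: the cases with $3\nmid a$ (including your ``fourth branch'' $a\equiv -1\pmod 3$, $3\mid b$) all give a separable $\overline F$ and are dispatched immediately by Dedekind (cases C1--C6); no polygon or lifting is needed there. The substantive work lies entirely in the two branches $3\mid a,\,3\mid b$ (cases C7--C13) and $3\mid a,\,3\nmid b$ (cases C14--C54).
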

\begin{theorem}\label{p5} Let $K=\Q(\th)$ be a number field with $\th$ a root of a monic irreducible polynomial $F(x)=x^9+ax+b$.	If $p$ is a rational prime greater or equals 5,
	then $p$ is not  a  common index divisor of $K$; $p$ does not divide $i(K)$. 
\end{theorem}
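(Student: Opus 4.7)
The starting point is the Dedekind--Hensel index-divisor criterion, already implicit in Theorems~\ref{p=2} and \ref{p=3}: $p$ divides $i(K)$ if and only if for some positive integer $f$, the number of prime ideals of $\Z_K$ above $p$ of residue degree $f$ strictly exceeds $N_p(f)$, the number of monic irreducible polynomials of degree $f$ in $\F_p[x]$. By the Fundamental Equality (\ref{FE}), the count of such primes is at most $\lfloor 9/f\rfloor\le 9$. Since $N_p(1)=p$ and $N_p(f)\ge \frac{p^2-p}{2}\ge 10$ for $f\ge 2$ and $p\ge 5$, the only constraint in danger is the one at $f=1$: we need at most $p$ primes of residue degree $1$ above $p$. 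For $p\ge 11$ this is automatic ($p\ge 11>9$), so the theorem reduces to verifying the degree-one bound for $p=5$ and $p=7$.

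For these two primes I would run the same Ore--Newton polygon case analysis as in Theorems~\ref{p=2} and \ref{p=3}, structured along the four families $(\n_p(a),\n_p(b))$. When $p\nmid ab$, Fermat's little theorem gives $x^9\equiv x\pmod{5}$ and $x^9\equiv x^3\pmod{7}$ for $x\in\F_p^*$, so $\bar F$ reduces on $\F_p^*$ to $(1+\bar a)x+\bar b$ modulo $5$ and to $x^3+\bar a x+\bar b$ modulo $7$, with at most $1$ root in $\F_5$ and at most $3$ in $\F_7$; a derivative computation shows that at most one of them can be a double root, and each such double root produces at most two primes of residue degree $1$ via a second-order $\phi$-polygon. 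When $p\mid a$ and $p\nmid b$, the polynomial $\bar F(x)=x^9+\bar b$ has simple roots given by $x^3=-\bar b$ in $\F_7^*$ or $x=-\bar b$ in $\F_5^*$, so at most $3$ respectively $1$. When $p\nmid a$ and $p\mid b$, $\bar F(x)=x(x^8+\bar a)$ is separable, and $x^8+\bar a$ splits off linear factors only when $\bar a\equiv -1\pmod p$; modulo $5$ this yields the factorisation $x^8-1=\prod_{c\in\F_5^*}(x-c)\cdot (x^2-2)(x^2+2)$ and thus exactly $5$ primes of residue degree $1$, while modulo $7$ at most $3$ such primes appear. Finally, when $p\mid a$ and $p\mid b$, the $x$-Newton polygon has one or two sides according to the sign of $9\n_p(a)-8\n_p(b)$; in the one-sided case the residual is the binomial $y^d+\overline{b_p}$ with $d=\gcd(9,\n_p(b))\in\{1,3,9\}$, contributing at most $3$ primes of degree $1$, and in the two-sided case the short side contributes exactly one such prime while the long side has residual $y^d+\overline{a_p}$ with $d=\gcd(8,\n_p(a))\in\{1,2,4,8\}$, with at most $\gcd(d,p-1)\le 4$ roots in $\F_p$ for $p=5$ and at most $2$ roots for $p=7$.

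Collecting these bounds, one sees that the total number of primes of residue degree $1$ above $p\in\{5,7\}$ never exceeds $p$ (the maxima are $5$ and $4$ respectively), which establishes the theorem. The bound is tight for $p=5$ (attained for instance when $5\nmid a$, $5\mid b$ and $a\equiv -1\pmod 5$, or when $\n_5(a)=4$, $\overline{a_5}\equiv -1\pmod 5$, and $\n_5(b)\ge 5$) and slack for $p=7$. The principal difficulty I anticipate is ruling out that higher-order Ore refinements, activated whenever a first-order residual polynomial has a repeated factor, create additional primes of residue degree $1$ beyond the first-order count. Here the sparseness of the trinomial $F$ (only three nonzero coefficients) is crucial: every first-order residual has the binomial shape $y^d+c$, so its roots in $\F_p$ are controlled by $\gcd(d,p-1)$; in the extremal $p=5$ configurations the relevant residual is already separable in $\F_5[y]$, so no further refinement is needed.
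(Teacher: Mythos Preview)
Your proof is correct, and it takes a genuinely different route from the paper's.

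The paper first invokes \.{Z}yli\'{n}ski's theorem ($p\mid i(K)\Rightarrow p<\deg K$) to reduce to $p\le 7$, and then uses the implication $p\mid i(K)\Rightarrow p\mid\Delta(F)$ together with the explicit discriminant $\Delta(F)=3^{18}b^8+2^{24}a^9$ to pin down the residues $(a,b)\bmod p$ that need checking. For $p=5$ this leaves only $(a,b)\equiv(0,0)$ or $(1,j)$ with $j\in\{1,2,3,4\}\pmod 5$, and the paper dispatches each by a short Newton-polygon computation (the nontrivial case being $\bar F=\phi_1^2\phi_2$ with $\deg\phi_2=7$). You instead bypass both the \.{Z}yli\'{n}ski citation and the discriminant filter: you recover $p<11$ directly from $N_p(1)=p>9$, and then you run the degree-one prime count over \emph{all} residue classes of $(a,b)$, exploiting Fermat's little theorem ($x^9\equiv x$ on $\F_5$, $x^9\equiv x^3$ on $\F_7^\times$) to control the linear factors of $\bar F$ and the roots of the binomial residuals $y^d+c$.

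What each approach buys: the paper's discriminant pre-filter shrinks the case list and makes the remaining computations almost mechanical, at the cost of invoking two external facts. Your argument is more self-contained and yields the sharper information that the maximum of $L_p(1)$ is exactly $5$ for $p=5$ (attained, e.g., when $a\equiv 4$, $b\equiv 0\pmod 5$) and $4$ for $p=7$; the paper never needs to see the tight case because the discriminant condition excludes it a priori. Your observation that every first-order residual is a separable binomial $y^d+c$ (since $c\ne 0$ and $p\nmid d$) cleanly closes the door on higher-order refinements in the $p\mid a,\,p\mid b$ branch; in the $p\nmid ab$ branch, the bound of $2$ from a multiplicity-$2$ linear factor follows already from $\sum e_if_i=2$, independent of any second-order polygon, so that case is safe as well.
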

\begin{corollary}\
	Let $K=\Q(\th)$ be a number field with $\th$ a root of a monic irreducible polynomial $F(x)=x^9+ax+b$.   Then
	\begin{enumerate}
		\item   $i(K)>1$ if and only if one of the conditions $A4, A6, A8, A9, A10, A15, A17, A20,  C28, \\ C30, C32, C42, C46, C48$ hold. Otherwise, $i(K)=1$. 
		\item If any of the  conditions  $A4, A6, A8, A9, A10, A15, A17, A20,  C28, C30, C32, C42, C46, \\ C48$ holds, then $K$ is not monogenic; $\Z_K$ has no power integral basis. 
	\end{enumerate}
\end{corollary}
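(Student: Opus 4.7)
The plan is to deduce the corollary directly from Theorems \ref{p=2}, \ref{p=3} and \ref{p5}, which together classify, for every rational prime $p$, whether $p$ is a common index divisor of $K$, combined with the elementary divisibility consequence of the definition of $i(K)$.

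For item (1), by the definition (\ref{Defi(K)}), $i(K)>1$ if and only if there exists at least one rational prime $p$ dividing $i(K)$, that is, a common index divisor of $K$. Theorem \ref{p5} shows that no prime $p\ge 5$ can play this role, so the only candidates are $p=2$ and $p=3$. By Theorem \ref{p=2}, the prime $2$ is a common index divisor of $K$ exactly when one of the conditions $A4, A6, A8, A9, A10, A15, A17, A20$ is satisfied, and by Theorem \ref{p=3}, the prime $3$ is a common index divisor of $K$ exactly when one of $C28, C30, C32, C42, C46, C48$ is satisfied. Hence $i(K)>1$ if and only if at least one of these fourteen conditions holds, and when none of them is satisfied we conclude $i(K)=1$, which is the contrapositive statement in the corollary.

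For item (2), assume one of the listed conditions holds. Then by item (1) we have $i(K)>1$, and so by the definition (\ref{Defi(K)}) the positive integer $i(K)$ divides $(\Z_K:\Z[\eta])$ for every $\eta\in\Z_K$ with $K=\Q(\eta)$. In particular $(\Z_K:\Z[\eta])>1$ for every such $\eta$, so $\Z_K\neq\Z[\eta]$ for every generator $\eta$ of $K$ lying in $\Z_K$; equivalently, $\Z_K$ admits no power integral basis and $K$ is not monogenic.

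Since the substantive content has already been carried out in the three theorems, no real obstacle remains here; the corollary is a purely formal combination of their conclusions with the divisibility property built into (\ref{Defi(K)}). The only point requiring care is to verify that the disjunction of conditions coming from Theorems \ref{p=2} and \ref{p=3} is stated without redundancy and covers precisely those $K$ for which $p=2$ or $p=3$ is a common index divisor; this can be read off directly from Tables \ref{table1} and \ref{table3}.
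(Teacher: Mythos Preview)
Your proof is correct and matches the paper's implicit argument: the corollary is stated without proof in the paper precisely because it is an immediate consequence of Theorems~\ref{p=2}, \ref{p=3} and \ref{p5} together with the definition of $i(K)$, exactly as you have written. There is nothing to add.
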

\begin{remarks}\
	The condition $i(K)=1$ is not sufficient for $K$ to be monogenic. Indeed there exist non-monogenic number fields their index equal $1$. For example, consider the pure cubic number field  $K=\Q(\sqrt[3]{m}),$ where $m=1+9k$ and $k=10$ or $12$. In the cubic case the only prime which can divide $i(K)$ is $2$ (see \cite{Engstrom,Zylinski}). Note that  the  minimal polynomial of $\sqrt[3]{m}$ is $F(x)=x^3-m$.  Reducing modulo $2$, we have $F(x) \equiv (x-1)(x^2+x+1) \md{2}$. By using Dedekind's criterion (see \cite[Theorem 6.1.4]{Co} and \cite{R}), $2$ does note divide the index  $(\Z_K : \Z[\sqrt[3]{m}])$. It follows that $i(K)=1$. On the other hand, according to the results of   \cite{GR17},  by Ga\'al and Remete, the index form equation in this case is $I(x_2, x_3)= 3x_1^2+3x_1^2x_2+x_1x_2^2-kx_2^3 \pm 1$ and is not solvable. Hence, $K$ is not monogenic. For an other example, see \cite[Example 7.4.4]{Alacawiliamsbook}.
\end{remarks}

	  \begin{table}[h!]
	  	\centering
		\begin{tabular} { | c | c | c |  }
			\hline
			Case & Conditions & Factorization of $3\Z_K$  \\
			\hline
			C1 & $a \equiv 1\md{3} $ and $b \equiv 1 \md{3}$ & $[1, 4, 4]$  \\
			\hline
				C2 & $a \equiv 1\md{3} $ and $b \equiv -1 \md{3}$ & $[1, 4, 4]$   \\
			\hline
				C3 & $a \equiv -1\md{3} $ and $b \equiv 1 \md{3}$ & $[3, 6]$ \\
			
			\hline
			C4 & $a \equiv -1\md{3} $ and $b \equiv -1 \md{3}$ & $[3, 6]$ \\
			\hline
			C5 & $a \equiv 1\md{3} $ and $b \equiv 0 \md{3}$ & $[1, 4, 4]$ \\
			\hline
			C6 &  $a \equiv -1\md{3} $ and $b \equiv 0 \md{3}$ &$[1, 1, 1, 2, 2, 2 ]$  \\
			\hline
			C7 &   $9 \n_3(a)<8 \n_3(b)$ and $\n_3(a) \in \{1, 3,  5, 7\}$ & $[1,1^8]$  \\
			\hline
			C8 &  $9 \n_3(a)<8 \n_3(b),$ $\n_3(a) \in \{2, 6\}$ and $a_3 \equiv 1 \md{3}$ & $[1,2^4]$  \\
			\hline
			C9 &   $9 \n_3(a)<8 \n_3(b),$ $\n_3(a) \in \{2, 6\}$ and $a_3 \equiv -1 \md{3}$ & $[1, 1^4, 1^4]$  \\
			\hline
			C10 &  $\n_3(a)=4, \, \n_3(b) \ge 5$ and $b_3 \equiv 1 \md{3}$   & $[1, 2^2, 2^2]$  \\
			\hline
				C11 &  $\n_3(a)=4, \, \n_3(b) \ge 5$ and $b_3 \equiv -1 \md{3}$   & $[1, 1^4, 1^4]$  \\
					\hline
						C12 &  $9 \n_3(a)>8 \n_3(b)$ and $\n_3(b) \in \{1, 2, 4, 5, 7, 8\}$   & $[1^9]$  \\
					\hline
						C13 &   $9 \n_3(a)>8 \n_3(b)$ and $\n_3(b) \in \{3, 6\}$   & $[1^9], [1^3, 1^3, 1^3],$  \\
					& & $[1^3, 1^6]$, or $[1^3, 2^3]$ \\
					\hline
		C14 &  $(a, b) \in \{(0, 2), (0, 5), (3, -1), (3, 2), (6, -1), (6, 5)\}\md{9}$   & $[1^9]$   \\
	\hline	
		C15 &  $(a, b) \in \{(3, 5), (6, 2)\}\md{9}$   & $[1, 1^8]$  \\
	\hline	
		C16 &   $(a, b) \in \{(0, 8), (0, 18), (9, -1), $ & $[1^3, 1^6]$  \\
			 &   $ (9, 8), (18, -1), (18, 17)\}\md{27}$  &   \\
	\hline	
		C17 &  $(a, b) \in \{(0, -1), (9, 17)\}\md{27}$   & $[1, 1^2, 1^6]$  \\
	\hline	
		C18 &    $(a, b) \in \{(18, 8), (18, 35), (45, 8), (45, 62), (72, 35), (72, 35)\}\md{81}$ &  $[1^3, 1^6]$ \\
	\hline	
		C19 &  $(a, b) \in \{(18, 62), (99, 224), (180, 143)\}\md{243}$   & $[1, 2, 1^6]$  \\
	\hline				
			C20 & $(a, b) \in \{(45, 35), (126, 197), (207, 116)\}\md{243}$  & $[3, 1^6]$  \\
		\hline	
			C21 &  $(a, b) \in \{(18, 143), (99, 62), (180, 224)\}\md{243}$   & $[3, 1^6]$  \\
		\hline
			C22 &  $(a, b) \in \{(45, 116), (126, 35), (207, 197)\}\md{243}$   & $[1^3, 1^6]$  \\
		\hline
			C23 &  $(a, b) \in\{(45, 197), (126, 116),  (207, 35)\}\md{243}$&  $[1, 2, 1^6]$  \\ 
		\hline
			C24 &   $(a, b) \in \{(18, 224), (99, 143)\} \md{243}$ & $[3, 1^6]$\\
		\hline
			C25 &   $ a \equiv 18\md{243}$ and $b \equiv 62\md{243}$ &  $[1, 2, 1^6]$ \\
		\hline
			C26 &  $(a, b) \in \{(72, 8), (234, 89), (153, 170)\}\md{243}$&  $[1, 2, 1^6]$ \\ 
		\hline
			C27 &  $(a, b) \in \{(153, 8), (72, 89), (234, 170)\}\md{243}$&  $[3, 1^6]$ \\ 
		\hline
			C28 &  $(a, b) \in \{(72, 170), (234, 8)\}\md{243}$&  $[1, 1, 1, 1^6]$ \\ 
		\hline	
		C29 &  $ a \equiv 153 \md{143}, b \equiv 89 \md{243}, 2 \mu > \n +2$ and $\n$ is odd&  $[1^2, 1, 1^6]$ \\ 
		\hline	
			C30 &  $a \equiv 153 \md{143}, b \equiv 89 \md{243}$ and $ 2 \mu < \n +2$&  $[1, 1, 1,  1^6]$ \\ 
			\hline	
			C31 &  $a \equiv 153 \md{243}, b \equiv 89 \md{243},  2 \mu > \n +2,$ &  $[2, 1,  1^6]$ \\ 
				 &  $\n$ is even,  and $(1+a+b)_3\equiv 1 \md{3}$&   \\
			\hline
			C32 &  $a \equiv 153 \md{243}, b \equiv 89 \md{243},  2 \mu > \n +2,$ &  $[1, 1, 1,  1^6]$ \\
				 &  $\n$ is even,  and $(1+a+b)_3\equiv -1 \md{3}$&   \\  
			\hline
			C33 &  $a \equiv 153 \md{243}, b \equiv 89 \md{243},  2 \mu = \n +2$,  and &  $[2, 1,  1^6]$ \\ 
			 &    and $(1+a+b)_3\equiv -1 \md{3}$&   \\ 
				\hline
			C34 &  $a \equiv 153 \md{243}, b \equiv 89 \md{243},  2 \mu = \n +2,$ and   &  $[2, 1, 1^6]$ \\ 
			 &  $ (1+a+b)_3\equiv 1 \md{3},$   &   \\ 
				\hline
\end{tabular}
\end{table}

\begin{table}[h!]
	\centering
	\begin{tabular} { | c | c | c |  }
		\hline
		C35 &  $(a, b) \in \{(0, 4), (0, 7), (3, 1), (3, 7), (6, 1), (6, 4)\}\md{9}$&  $[1^9]$ \\ 
	\hline
	C36 &  $(a, b) \in \{(3, 4), (6, 7)\}\md{9}$&  $[1, 1^8]$ \\ 	
	\hline
	C37 &  $(a, b) \in \{(0, 10), (0, 19), (9, 1), (9, 19), (18, 1), (18, 10)\}\md{27}$&  $[1^3, 1^6]$ \\  
	\hline
	C38 &  $(a, b) \in \{(0, 1),  (9, 10)\}\md{27}$&  $[1, 1^2, 1^6]$ \\ 	
	\hline
	C39 &  $(a, b) \in \{(18, 46), (18, 73), (45, 19), (45, 73), (72, 19), (72, 46)\}\md{81}$&  $[1^3, 1^6]$ \\
	\hline
	C40 &  $(a, b) \in \{(18, 100), (99, 18), (18, 19), (72, 154)\} \md{243} $  & $[3, 1^6]$  \\
	\hline				
	C41 &  $(a, b) \in \{(45, 208), (126, 46), (207, 127), (153, 73), (153, 235)\} \md{243} $  & $[3, 1^6]$  \\
\hline
	C42 &  $a \equiv 234 \md{243} $ and $b \equiv 73 \md{243}$  & $[1, 1, 1, 1^6]$  \\
\hline	
C43 &  $a \equiv 234 \md{243} $ and $b \equiv 154 \md{243}$  & $[2, 1, 1^6]$  \\
\hline
C44 &  $(a, b) \in \{(45, 46), (126, 127), (207, 208), (153, 154)\} \md{243} $  & $[1, 2, 1^6]$  \\
\hline
C45 &  $a \equiv 234 \md{243}, b \equiv 235 \md{243} , 2 \mu > \omega +2$ and $\omega$ is odd&  $[1^2, 1, 1^6]$ \\ 
\hline	
C46 &  $a \equiv 234 \md{243}, b \equiv 235 \md{243}$ and $ 2 \mu < \omega +2$&  $[1, 1, 1,  1^6]$ \\ 
\hline	
C47 &  $a \equiv 234 \md{243}, b \equiv 235 \md{243},  2 \mu > \omega +2$, &  $[2, 1,  1^6]$ \\ 
&  $\omega$ is even,  and $(-1-a+b)_3\equiv -1 \md{3}$&   \\ 
\hline
C48 &  $a \equiv 234 \md{243}, b \equiv 235 \md{243},  2 \mu > \omega +2$,&  $[1, 1, 1,  1^6]$ \\ 
&  $\omega$ is even,  and $(-1-a+b)_3\equiv 1 \md{3}$&   \\ 
\hline
C49 &  $a \equiv 234 \md{243}, b \equiv 235 \md{243},  2 \mu = \omega +2$,  and &  $[2, 1,  1^6]$ \\ 
&   $(-1-a+b)_3\equiv 1 \md{3}$&   \\ 
\hline
	C50 &  $(a, b) \in \{(45, 127), (126, 208), (207, 46)\} \md{243} $  & $[3, 1^6]$  \\
\hline
C51 &  $(a, b) \in \{(99, 100), (180, 181), (72, 73)\} \md{243} $  & $[1, 2, 1^6]$  \\
\hline
	C52 &  $(a, b) \in \{(18, 181), (99, 19), (180, 100)\} \md{243} $  & $[1, 2, 1^6]$  \\
\hline
	C53 &  $a \equiv 72 \md{243} $ and $b \equiv 135 \md{243}$  & $[3,1^6]$  \\
\hline	
C54 &  $a \equiv 18 \md{243} $ and $b \equiv 19 \md{243}$  & $[1^3,  1^6]$  \\
\hline
	
\end{tabular}
\caption{Factorization of $3\Z_K$}
\label{table3}
\end{table} 
\newpage
\section{Preliminary results}
Let $K $ be a number field generated by $\th$, a root of a monic irreducible trinomial $F(x)= x^9+ax+b \in \Z[x]$ and $\Z_K$  its ring of integers. Let $p$ be a rational prime.  We start by stating   the  following Lemma  which gives a necessary and sufficient condition for a rational prime  $p$ to be a prime common index divisor of $K$. This Lemma will  play an important role in the proof of our results. It is a consequence of  Dedekind's theorem on factorization of primes in number fields (see \cite[Theorems 4.33 and 4.34 ]{Na} and \cite{R}). 
\begin{lemma} \label{comindex}
	Let  $p$ be a rational prime  and $K$  a number field. For every positive integer $f$, let $L_p(f)$ denote the number of distinct prime ideals of $\Z_K$ lying above $p$ with residue degree $f$ and $N_p(f)$ denote the number of monic irreducible polynomials of  $\F_p[x]$ of degree $f$. Then $p$ is a common index divisor of $K$ if and only if $L_p(f) > N_p(f)$ for some positive integer $f$.
\end{lemma}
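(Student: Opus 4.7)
The plan is to prove the two directions by using Dedekind's theorem on the factorization of $p\Z_K$ versus the factorization of the minimal polynomial of a generator of $K$ modulo $p$.

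First I would recall the key classical fact (Dedekind's theorem): for $\eta \in \Z_K$ with $K = \Q(\eta)$ and minimal polynomial $g_\eta(x) \in \Z[x]$, one has $p \nmid (\Z_K : \Z[\eta])$ if and only if the factorization $\bar g_\eta(x) = \prod_{i=1}^{g} \bar h_i(x)^{e_i}$ in $\F_p[x]$ into pairwise distinct monic irreducible factors $\bar h_i$ matches the prime ideal factorization $p\Z_K = \prod_{i=1}^{g} \pF_i^{e_i}$, in the sense that $\deg \bar h_i = f(\pF_i/p) = f_i$ for each $i$. Consequently $p \mid i(K)$ if and only if \emph{no} generator $\eta$ of $K$ in $\Z_K$ admits such a matching factorization modulo $p$.

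For the direction ($\Leftarrow$), suppose $L_p(f) > N_p(f)$ for some $f$. Then there are more prime ideals above $p$ of residue degree $f$ than there are monic irreducible polynomials of degree $f$ in $\F_p[x]$. By pigeonhole, one cannot choose pairwise distinct irreducibles $\bar h_i \in \F_p[x]$ of degrees $f_i$, so for every generator $\eta$ of $K$ the Dedekind matching fails, forcing $p \mid (\Z_K : \Z[\eta])$ and hence $p \mid i(K)$.

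For the direction ($\Rightarrow$), working contrapositively, assume $L_p(f) \le N_p(f)$ for every $f \ge 1$. I would then exhibit a generator $\eta \in \Z_K$ of $K$ with $p \nmid (\Z_K : \Z[\eta])$. Since $L_p(f) \le N_p(f)$, I can assign to each prime $\pF_i$ a monic irreducible $\bar h_i \in \F_p[x]$ of degree $f_i$ so that the $\bar h_i$ are pairwise distinct. For each $i$, choose $\bar\eta_i \in \Z_K/\pF_i$ to be a root of $\bar h_i$ generating the residue field $\Z_K/\pF_i \cong \F_{p^{f_i}}$ over $\F_p$. By the Chinese Remainder Theorem applied to $\Z_K/p\Z_K \cong \prod_i \Z_K/\pF_i^{e_i}$, there exists $\eta_0 \in \Z_K$ whose image in each $\Z_K/\pF_i$ equals $\bar\eta_i$. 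A direct computation then yields $\bar g_{\eta_0}(x) = \prod_i \bar h_i(x)^{e_i}$, so the Dedekind criterion gives $p \nmid (\Z_K : \Z[\eta_0])$ --- \emph{provided} $K = \Q(\eta_0)$. To secure this, I would perturb: pick any $\alpha \in \Z_K$ with $K = \Q(\alpha)$ and set $\eta = \eta_0 + p^N\alpha$ for sufficiently large $N$; reduction modulo $p$ is unaffected, and the set of $N$ for which $\Q(\eta)$ is a proper subfield of $K$ is finite, so a suitable $N$ exists and $\eta$ is the required generator.

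The main obstacle I expect is the last step of the $(\Rightarrow)$ argument, namely forcing the CRT-constructed element $\eta_0$ (or a perturbation of it) to actually generate $K$ over $\Q$ while preserving its residue classes modulo each $\pF_i^{e_i}$; this is the point where one must combine an approximation argument (CRT) with a genericity argument (avoiding proper subfields). The other steps reduce to a careful bookkeeping of Dedekind's theorem together with the counting inequality $L_p(f) \le N_p(f)$.
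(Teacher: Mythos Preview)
The paper does not supply its own proof of this lemma; it merely states that the result is a consequence of Dedekind's theorem and cites \cite[Theorems 4.33 and 4.34]{Na} and \cite{R}. Your outline follows exactly this route, and the $(\Leftarrow)$ direction is correct as written.

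There is, however, a genuine gap in your $(\Rightarrow)$ argument whenever some $e_i>1$. You apply CRT to $\Z_K/p\Z_K \cong \prod_i \Z_K/\pF_i^{e_i}$ but only prescribe the image of $\eta_0$ in each residue \emph{field} $\Z_K/\pF_i$, and then assert that ``a direct computation yields $\bar g_{\eta_0}(x) = \prod_i \bar h_i(x)^{e_i}$''. This does not follow: with only the residue-field condition, the image of $\eta_0$ in $\Z_K/\pF_i^{e_i}$ may already be annihilated by $\bar h_i$ itself rather than only by $\bar h_i^{e_i}$, in which case $\F_p[\bar\eta_0]\subsetneq \Z_K/p\Z_K$ and Dedekind's criterion fails. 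The repair is to choose, for each $i$, an element $\bar\alpha_i \in \Z_K/\pF_i^{e_i}$ that \emph{generates} this local Artinian ring as an $\F_p$-algebra; such a generator exists because the residue extension $\F_{p^{f_i}}/\F_p$ is separable (concretely, take a lift of a primitive element of the residue field plus a uniformizer). One can still arrange the reductions $\bar\alpha_i \bmod \pF_i$ to have pairwise distinct minimal polynomials $\bar h_i$, since $L_p(f)\le N_p(f)$ for every $f$. With this stronger choice, CRT produces $\eta_0$ with $\F_p[\bar\eta_0]=\Z_K/p\Z_K$; this already forces $1,\eta_0,\dots,\eta_0^{n-1}$ to be $\Q$-linearly independent, hence $K=\Q(\eta_0)$, and your final perturbation step becomes unnecessary.
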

	 
		 Recall  by \cite[Proposition 4.35]{Na} that  the number of  monic irreducible polynomials of degree $f$ in $\F_p[x]$ is
		$$N_p(f) = \frac{1}{f} \sum_{d \mid f} \mu (d) p^{\frac{f}{d}},$$
		where $\mu$ is the M\"{o}bius function. Note  also that $N_{p}(f+1)>N_{p}(f)$ (except one case when $p=2$ and $f=1$).  
Since the above  condition given in  Lemma \ref{comindex} for a  prime $p$ to divide $i(K)$  depends upon the factorization of $p$ in $\Z_K$, we will need to determine the number of distinct prime ideals of $\Z_K$ lying above $p$. We will use Newton polygon techniques. So, let us  shortly recall   some fundamental notions and results on this method. For more details, we refer to \cite{ EMN, Nar, Narprime, MN92, O}.  The reader can see also \cite{BRM, BATCA}.  
 Let $p$  be a rational prime  and $\n_p$  the discrete valuation of $\Q_p(x)$  defined on $\Z_p[x]$ by $$\n_p\left(\sum_{i=0}^{m} a_i x^i\right) = \min \{ \n_p(a_i), \, 0 \le i \le m\}.$$ Let $\phi(x) \in \mathbb{Z}[x]$ be a monic polynomial whose reduction modulo $p$ is irreducible. The polynomial $F(x) \in \mathbb{Z}[x]$ admits a unique $\phi$-adic development $$ F(x )= a_0(x)+ a_1(x) \phi(x) + \cdots + a_n(x) {\phi(x)}^n,$$ with $ \deg \ ( a_i (x))  < \deg \ ( \phi(x))$. For every $0\le i \le n,$   let $ u_i = \n_p(a_i(x))$. The $\phi$-Newton polygon of $F(x)$ with respect to $p$ is the  lower boundary convex envelope of the set  of points $ \{  ( i , u_i) \, , 0 \le i \le n \, , a_i(x) \neq 0  \}$ in the euclidean plane, which we denote   by $N_{\phi} (F)$. The polygon  $N_{\phi} (F)$ is the union of different adjacent sides $ S_1, S_2, \ldots , S_g$ with increasing slopes $ \lambda_1, \lambda_2, \ldots,\lambda_g$. We shall write $N_\phi(F) = S_1+S_2+\cdots+S_g$. The polygon determined by the sides of negative slopes of $N_{\phi}(F)$ is called the  $\phi$-principal Newton polygon of $F(x)$  with respect to $p$ and will be denoted by $\npp{F}$. The length of $\npp{F}$ is $ l(\npp{F}) = \nu_{\overline{\ph}}(\overline{F(x)})$;  the highest power of $\phi$ dividing $F(x)$ modulo $p$.\\
 Let $\mathbb{F}_{\phi}$ be the finite field   $ \mathbb{Z}[x]\textfractionsolidus(p,\phi (x)) \simeq \mathbb{F}_p[x]\textfractionsolidus (\overline{\ph(x)}) $.
 We attach to  any abscissa $ 0 \leq i \leq  l(\npp{F})$ the following residue coefficient $ c_i \in  \mathbb{F}_{\phi}$
 :

 $$c_{i}=
 \left
 \{\begin{array}{ll} 0,& \mbox{if }  (i , u_i ) \, \text{ lies  strictly  above }  \ \npp{F},\\
 \dfrac{a_i(x)}{p^{u_i}}
 \,\,
 \md{(p,\phi(x))},&\mbox{if } \  (i , u_i ) \, \text{lies on } \npp{F}. 
 \end{array}
 \right.$$Let $S$ be one of the sides of $\npp{F}$. Then the length of $S$, denoted $l(S)$ is the length of its  projection to the horizontal axis and its height, denoted $h(S)$ is the length of its  projection to the vertical axis.  Let   $\lambda = - \frac{h(S)}{l(S)}= - \frac{h}{e} $  its slope, where $e$ and $h$ are two positive coprime integers.  The degree of $S$ is $d(S) = \gcd(h(S), l(S))=  \frac{l(S)}{e}$; it is equal to the the number of segments into which the integral lattice divides $S$. More precisely, if $ (s , u_s)$ is the initial point of $S$, then the points with integer coordinates  lying in $S$ are exactly $$  (s , u_s) ,\ (s+e , u_s - h) , \ldots, (s+de , u_s - dh).$$  The natural integer $e= \frac{l(S)}{d(S)}$ is called the ramification index of the side $S$ and denoted by $e(S)$. Further, we attach to $S$ the following residual polynomial:  $$ R_{\l}(F)(y) = c_s + c_{s+e}y+ \cdots + c_{s+(d-1)e}y^{d- 1}+ c_{s+de}y^d \in \mathbb{F}_{\phi}[y].$$  
Now, we give some important definitions.
\begin{definitions}
	Let $F(x) \in \Z[x]$ be a monic irreducible polynomial.  Let $\overline{F(x)}=\prod_{i=1}^t\overline{\ph_i}(x)^{l_i}$ be  the factorization  of $\overline{F(x)}$ into a product  of powers of distinct monic irreducible polynomials in  $\mathbb{F}_p [x]$. For every $i=1,\dots,t$, let  $N_{\ph_i}^+(F)=S_{i1}+\dots+S_{ir_i}$, and for every {$j=1,\dots, r_i$},  let $R_{\l_{ij}}(F)(y)=\prod_{s=1}^{s_{ij}}\psi_{ijs}^{n_{ijs}}(y)$ be the factorization of $R_{\l_{ij}}(F)(y)$ in $\F_{\ph_i}[y]$. 
	\begin{enumerate}
	\item  For every $i=1,\dots,t$, the $\ph_i$-index of $F(x)$, denoted by $ind_{\ph_i}(F)$, is  deg$(\ph_i)$ multiplied by  the number of points with natural integer coordinates that lie below or on the polygon $N_{\ph_i}^{+}(F)$, strictly above the horizontal axis  and strictly beyond the vertical axis.
	\item The polynomial $F(x)$ is said to be $\phi_i$-regular with respect to $\n_p$ (or $p$) if for every $j=1,\dots, r_i$,  $R_{\l_{ij}}(F)(y)$ is separable; $n_{ijs}=1$.
	\item The polynomial $F(x)$ is said to be $p$-regular if it is $\phi_i$-regular for every $ 1 \leq i \leq t$.
	\end{enumerate}
\end{definitions} 
Now, we state Ore's theorem which will be often used in the proof of our theorems (see   \cite[Theorem 1.7 and Theorem 1.9]{EMN},  \cite{MN92} and \cite{O}):

 \begin{theorem}\label{ore} (Ore's Theorem)\
 	\\Let $K$ be a number field generated by  $\th$, a root of a monic irreducible polynomial $F(x) \in \Z[x]$.  Under the above notations, we have: 
 	\begin{enumerate}
 		\item
 		$$ \nu_p((\Z_K:\Z[\th]))\ge \sum_{i=1}^t ind_{\ph_i}(F).$$ Moreover, the  equality holds if $F(x)$ is $p$-regular
 		\item
 		If  $F(x)$ is $p$-regular, then 
 		$$p\Z_K=\prod_{i=1}^t\prod_{j=1}^{r_i}
 		\prod_{s=1}^{s_{ij}}\pF^{e_{ij}}_{ijs},$$ where $e_{ij}$ is the ramification index
 		of the side $S_{ij}$ and $f_{ijs}=\mbox{deg}(\ph_i)\times \mbox{deg}(\psi_{ijs})$ is the residue degree of $\mathfrak{p}_{ijs}$ over $p$.
 	\end{enumerate}
 \end{theorem}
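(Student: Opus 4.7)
The plan is to follow the Ore--Montes--Nart strategy: reduce to the $p$-adic completion, then iteratively apply Hensel--type lifting lemmas driven by the $\ph$-Newton polygon and its residual polynomials to factor $F(x)$ over $\Z_p$, and finally read off both the prime ideal factorization of $p\Z_K$ and the $p$-part of $(\Z_K:\Z[\th])$.

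First, I would localize. Since $\overline{F} = \prod_{i=1}^t \overline{\ph_i}^{l_i}$ in $\F_p[x]$, Hensel's lemma yields a factorization $F = \prod_{i=1}^t F_i$ in $\Z_p[x]$ with $\overline{F_i} = \overline{\ph_i}^{l_i}$, and hence $\Z_p\otimes_{\Z}\Z_K = \prod_i \Oc_i$, where $\Oc_i$ is the integral closure of $\Z_p$ in $\Q_p[x]/(F_i)$. Both the primes above $p$ and the local index $\nu_p((\Z_K:\Z[\th]))=\sum_i\nu_p((\Oc_i:\Z_p[x]/(F_i)))$ split accordingly, so it suffices to treat one factor and assume $\overline{F}=\overline{\ph}^{\,l}$ for a single irreducible $\overline{\ph}$. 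Next, if $N_\ph^+(F)=S_1+\cdots+S_r$ has sides of distinct slopes $\lambda_j$ and lengths $l(S_j)$, the key ``polygon Hensel lemma'' produces a factorization $F=\prod_{j=1}^r F_j$ in $\Z_p[x]$ such that $N_\ph^+(F_j)$ is a single side of slope $\lambda_j$ and length $l(S_j)$; this is proved by constructing successive approximations to each $F_j$ modulo higher powers of the ideal $(p,\ph(x))$, with distinctness of slopes forcing convergence.

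For a single--sloped factor of slope $-h/e$ in lowest terms, I would invoke a second Hensel--type lifting driven by the residual polynomial: if $R_{\lambda_j}(F)(y)=\prod_s \psi_{js}(y)^{n_{js}}$ in $\fph[y]$, then $F_j=\prod_s F_{js}$ in $\Z_p[x]$, where $F_{js}$ has $\ph$-polygon a single side of slope $-h/e$ and residual polynomial $\psi_{js}^{n_{js}}$. When $F$ is $\ph$-regular (all $n_{js}=1$), each $F_{js}$ is irreducible over $\Q_p$ and defines a local extension of ramification index $e(S_j)$ and residue degree $\deg(\ph)\deg(\psi_{js})$; the bijection between these factors and primes of $\Z_K$ above $p$ yields the asserted decomposition in part (2). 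When some $n_{js}>1$, further prolongations of the valuation (second--order Newton polygons) are needed to refine the factorization, so one obtains only partial information.

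For the index formula, I would use the local discriminant relation $\nu_p(\mathrm{disc}(F_i))=2\,\nu_p((\Oc_i:\Z_p[x]/(F_i)))+\nu_p(\mathrm{disc}(\Oc_i/\Z_p))$. Computing $\nu_p(\mathrm{disc}(F_i))$ via the Vandermonde over the roots in $\overline{\Q_p}$, and writing the slope data of these roots in terms of the polygon, one checks that (up to the factor $\deg(\ph_i)$) the contribution of $\ph_i$ to $\nu_p(\mathrm{disc}(F))-\nu_p(\mathrm{disc}(\Z_K))$ in the $p$-regular case is twice the number of lattice points strictly above the horizontal axis and on or below $N_{\ph_i}^+(F)$; this gives $\nu_p((\Z_K:\Z[\th]))=\sum_i \mathrm{ind}_{\ph_i}(F)$. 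In the non--regular case, higher--order contributions are positive but unaccounted for by the first polygon, which yields the inequality. The main obstacle is the ``polygon Hensel lemma'' invoked in the second and third steps: converting the purely combinatorial splitting by slopes, and then by irreducible factors of the residual polynomial, into an actual factorization in $\Z_p[x]$. Making this lifting rigorous --- either in Ore's original style of successive approximations or via MacLane's inductive valuations as formalized by Gu\`ardia--Montes--Nart --- is the technical heart of the argument; the remaining Vandermonde computation and degree bookkeeping are then routine.
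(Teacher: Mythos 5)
The paper does not actually prove this theorem: it is stated as a quoted result with references to Ore, Montes--Nart, and El Fadil--Montes--Nart, so there is no in-paper argument to compare against. Your outline does reproduce the architecture of the proof in those references --- Hensel splitting by the irreducible factors of $\overline{F}$ in $\F_p[x]$, then the theorem of the polygon (splitting by sides), then the theorem of the residual polynomial (splitting by irreducible factors of $R_{\l_{ij}}(F)$), then discriminant/index bookkeeping --- so the route is the right one.

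As a proof, however, it has two genuine gaps. First, essentially all of the content of the theorem lives in the two lifting lemmas you invoke: the factorization of each Hensel factor $F_i$ over $\Z_p$ according to the sides of $N_{\ph_i}^+(F)$, and the further factorization of each single-sided factor according to the irreducible factors of its residual polynomial, together with the assertion that in the separable case each resulting factor is irreducible over $\Q_p$ with ramification index $e(S_{ij})$ and residue degree $\deg(\ph_i)\deg(\psi_{ijs})$. You name these as ``Hensel-type'' lemmas proved ``by successive approximations,'' but no approximation scheme, convergence argument, or irreducibility verification is supplied; stating them is very nearly restating part (2) of the theorem, and you acknowledge as much. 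Second, the index argument as described is not quite sound as stated: to extract $\nu_p((\Z_K:\Z[\th]))$ from $\nu_p(\mathrm{disc}(F))$ via the relation $\nu_p(\mathrm{disc}(F_i))=2\,\nu_p((\Oc_i:\Z_p[x]/(F_i)))+\nu_p(\mathrm{disc}(\Oc_i/\Z_p))$ you must compute the local discriminants $\nu_p(\mathrm{disc}(\Oc_i/\Z_p))$, i.e.\ the differents of the local extensions, and when $p\mid e(S_{ij})$ (wild ramification) these are not determined by $e$ and $f$ alone, so ``one checks'' hides a real difficulty. The standard proofs instead compute $\nu_p(\operatorname{Res}(F_{ijs},F_{i'j's'}))$ for pairs of local factors directly in terms of the polygons and residual polynomials, showing that the total lattice-point count $\sum_i ind_{\ph_i}(F)$ is a lower bound for $\nu_p((\Z_K:\Z[\th]))$ with defect controlled by the multiplicities $n_{ijs}$; some version of that resultant computation is needed to obtain both the inequality and the equality in the $p$-regular case.
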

The following result is an immediate consequence of the above theorem. 
 \begin{corollary}\label{corollaryore} Under the above hypotheses, then the following hold:
 \begin{enumerate}
 \item  If for some $i=1, \ldots, r$, $l_i=1$, then the factor $\ph_i(x)$ of $F(x)$ modulo $p$ provides a unique prime ideal of $\Z_K$ lying above $p$, of residue degree equals $\deg(\ph_i(x))$ and of  ramification index  equals $1$.
 \item If for some $i=1, \ldots, r$, $N_{\ph_i}^+(F)=S_{i1}$ has only one side of degree $1$, then the factor $\ph_i(x)$ of $F(x)$ modulo $p$ provides a unique prime ideal of $\Z_K$ lying above $p$, of residue degree equals $\deg(\ph_i(x))$ and  of ramification index equals   $l_i$.
 
 \end{enumerate}
 
 \end{corollary}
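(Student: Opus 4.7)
The plan is to derive both assertions as direct applications of Theorem 2.2 (Ore), using the fact that in each of the two hypotheses the relevant residual polynomial will turn out to have degree one, and a degree-one polynomial is trivially separable. Thus in both cases $F(x)$ will automatically be $\phi_i$-regular, so one simply reads off the factorization from Ore's theorem.

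For part (1), suppose $l_i=1$; that is, $\overline{\phi_i}$ divides $\overline{F(x)}$ exactly once. By definition, the length of the principal Newton polygon equals $l(N_{\phi_i}^+(F)) = \nu_{\overline{\phi_i}}(\overline{F(x)}) = l_i = 1$. Hence $N_{\phi_i}^+(F)$ consists of a single side joining $(0,u_0)$ to $(1,0)$, where $u_0 = \nu_p(a_0(x))\ge 1$. This side has length $1$, so its degree $d = \gcd(h,l) = 1$ and its ramification index $e=l/d=1$. The residual polynomial attached to this side is therefore linear, hence separable, and $F(x)$ is $\phi_i$-regular. Applying Theorem 2.2(2) shows that $\phi_i$ contributes exactly one prime ideal above $p$, with ramification index $1$ and residue degree $\deg(\phi_i)\cdot \deg(\psi_{i11}) = \deg(\phi_i)$, as claimed.

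For part (2), suppose $N_{\phi_i}^+(F)=S_{i1}$ consists of a single side of degree $d(S_{i1})=1$. By definition of the residual polynomial, $R_{\lambda_{i1}}(F)(y)\in \mathbb{F}_{\phi_i}[y]$ has degree $d(S_{i1})=1$. A linear polynomial over a field is irreducible and separable, so $F(x)$ is again $\phi_i$-regular, and the factorization $R_{\lambda_{i1}}(F)(y)=\psi_{i11}(y)$ has a single factor of degree one with multiplicity one. The length of $S_{i1}$ is $l(S_{i1}) = \nu_{\overline{\phi_i}}(\overline{F(x)}) = l_i$, so the ramification index of the side is $e(S_{i1}) = l(S_{i1})/d(S_{i1}) = l_i$. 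Theorem 2.2(2) then yields a unique prime ideal above $p$ associated with $\phi_i$, of ramification index $l_i$ and residue degree $\deg(\phi_i)\cdot 1 = \deg(\phi_i)$, which is the desired conclusion.

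There is essentially no technical obstacle here: the whole content is that the hypothesis in either case forces the residual polynomial attached to the unique relevant side to be linear, so that $\phi_i$-regularity holds automatically and Ore's theorem applies without further verification. The only small point to be careful about is the correct identification of $d(S_{i1})$, $e(S_{i1})$, and $l(S_{i1})$ with the ramification index and the residue degree of the prime produced by the theorem, which follows directly from the definitions recalled before the statement of Theorem 2.2.
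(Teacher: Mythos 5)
Your proof is correct and follows exactly the route the paper intends: the paper states the corollary as an immediate consequence of Ore's theorem without further argument, and your write-up simply makes explicit the observation that in each case the unique relevant side carries a linear (hence separable and irreducible) residual polynomial, so $\phi_i$-regularity is automatic and the residue degree and ramification index can be read off from $d(S_{i1})$, $e(S_{i1})$, and $l(S_{i1})$. No discrepancy with the paper.
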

  As an example of application of   Ore's theorem, we propose the following one.
  
 \begin{example}\
 	\\Consider the  sextic number field $K=\Q(\th)$ with $\th$ a root of  $F(x) = x^6 + 24 x +15 \in \Z[x] $. Reducing modulo $2$, we see that   $ \ol{F(x)} =  \ol{(\ph_1(x) \cdot  \ph_2(x))}^2 $  in  $\F_2[x],$ where $\ph_1(x) = x-1$ and $\ph_2(x) = x^2+x+1$. The $\ph_1$-adic development of $F(x)$ is 
 	\begin{eqnarray*}\label{dev61phi}
 		F(x)= \ph_1(x)^6+6 \ph_1(x)^5+15 \ph_1(x)^4+20 \ph_1(x)^3 + 15 \ph_1(x)^2+30\ph_1(x)+40.
 	\end{eqnarray*}
 	Thus, $N_{\ph_1}^+(F)=S_{11}+S_{12}$ has two sides of degree $1$ each  joining the points $(0,3), (1,1)$ and $(2,0)$. 	Their attached residual polynomials  are
 	$R_{\l_{11}}(F)(y) = R_{\l_{12}}(F)(y)=   1 +y $ which are separable in $\F_{\ph_1}[y] \simeq \F_2[y] $ as they are of degree $1$. Thus, $F(x)$ is $\ph_1$-regular.
 	The $\ph_2$-adic development of $F(x)$ is\begin{eqnarray*} \label{dev61psi}
 		F(x)=\ph_2(x)^3-3x\ph_2(x)^2+(2x-2)\ph_2(x)+ 24x+16.
 	\end{eqnarray*}
 	Since	$ \n_2(24x+16)= \min(\n_2(24), \n_2(16)) \ge 3$. Thus,  $N_{\ph_2}^+(F) = S_{21}+S_{22}$ has two sides of degree $1$ each  (see FIGURE 1). Thus,  $R_{\l_{2k}}(F)(y)$ is irreducible over $\F_{ \ph_2}$ for $k = 1,2$.  So,  $F(x)$ is $\ph_2$-regular.  Hence, $F(x)$ is $2$-regular. By  Theorem \ref{ore},  one gets: $$\nu_2(ind(F))=\nu_2((\Z_K:\Z[\th])) = ind_{\ph_1}(F) + ind_{\ph_2}(F)=1+1=2 $$ and $$2\Z_K = \pF_{111} \cdot \pF_{121} \cdot \pF_{211} \cdot \pF_{221}, $$ where $\pF_{111}, \pF_{121}, \pF_{211}$ and $ \pF_{221}$ are four prime ideals of $\Z_K$ with  residue degrees $f(\pF_{111}/2)= f(\pF_{121}/2)=1 $ and $f(\pF_{211}/2)= f(\pF_{221}/2)=2$. Further, since there are  two prime ideals of $\Z_K$  of  residue degree $2$ each  lying above $2$ ($N_2(2)=2$), by using Lemma \ref{comindex}, the prime  $2$ is a  common index divisor of $K$. Consequently, $K$ is not  monogenic.
 \end{example}
 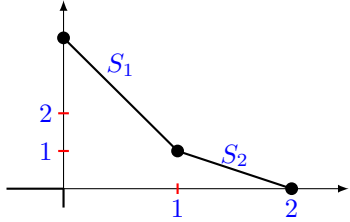
\begin{figure}[htbp]

 	\centering
 	
 	\begin{tikzpicture}[x=1.5cm,y=0.5cm]
 	\draw[latex-latex] (0,5) -- (0,0) -- (2.5,0) ;

 	\draw[thick] (0,0) -- (-0.5,0);
 	\draw[thick] (0,0) -- (0,-0.5);
 	
 	\draw[thick,red] (1,-2pt) -- (1,2pt);
 	\draw[thick,red] (2,-2pt) -- (2,2pt);
 	\draw[thick,red] (-2pt,1) -- (2pt,1);
 	\draw[thick,red] (-2pt,2) -- (2pt,2);
 	\node at (1,0) [below ,blue]{\footnotesize  $1$};
 	\node at (2,0) [below ,blue]{\footnotesize $2$};
 	\node at (0,1) [left ,blue]{\footnotesize  $1$};
 	\node at (0,2) [left ,blue]{\footnotesize  $2$};
 	\draw[thick, mark = *] plot coordinates{(0,4) (1,1) (2,0) };
 	\node at (0.5,2.7) [above  ,blue]{\footnotesize $S_{1}$};
 	\node at (1.5,0.3) [above   ,blue]{\footnotesize $S_{2}$};
 	\end{tikzpicture}
 	\caption{   $N_{\ph_2}^+(F)$ with respect to $\n_2$.}
 \end{figure}
 Since we will use Newton polygon techniques in second order to treat some cases,  we   briefly recall some  related  concepts of this algorithm in second order that we use throughout.\\
Let $p$ be a rational prime. Assume that  $F(x)$ is not regular with respect to $\n_p$.  Then we cannot apply Ore's Theorem. In their pioneer works,   Gu\'{a}rdia, Montes and 	Nart  introduced an efficient algorithm  to complete the factorization of   the principal ideal  $p\Z_K$ (see \cite{ Nar, Narprime}). They defined the Newton polygon of higher  orders, and  proved an extension of the theorems of the product,  the polygon,  the residual polynomial  and  of the index in any  order $r$. Let us look at what the news in the second order; $r=2$.  Let $\ph(x)$ be a monic irreducible factor of $F(x)$ modulo $p$ and  $S$   a side of $N_1 = N^{+}_{\ph}(F)$ with slope $\lambda = - \frac{h}{e}$, where $h$ and $e$ are two coprime positive integers. Assume that   $R_{\l}(F)(y)$ is not separable in $\F_{\ph}[y]$ and let   $\psi(y)$ be a  monic irreducible factor of $R_{\l}(F)(y)$ of degree $f$. A type of order $2$  is a data:						
$$ T= \bigl(\ph(x), \lambda, \psi(y);  \Phi_2(x)\bigr),$$ such that   $\Phi_2(x)$ is a monic irreducible polynomial in $\Z_p[x]$ of degree $m_2 = e\cdot f \cdot \deg(\ph(x))$ such that 
\begin{enumerate}
	\item  $N_1(\Phi_2)$ has only one side  of  slope $\lambda$.
	\item The residual polynomial  of $\Phi_2$ in first order,  $R_{\l}(\Phi_2)(y) \simeq  \psi_1(y)$ in  $\F_{\ph}[y]$ (up to multiply by a nonzero element of $\F_{\ph}$).
\end{enumerate}
If the above conditions hold, then $\Phi_2(x)$ induces a valuation  $\n^{(2)}_p$ on $\Q_p(x)$, called the augmented valuation of $\n_p$ of second order with respect to the type  $T$. Moreover, by  \cite[Proposition 2.7]{Nar}, if $P(x) \in \Z_p[x]$ such that  $P(x )= a_0(x)+ a_1(x) \ph(x) + \cdots + a_l(x) {\ph(x)}^l$, then $$\n^{(2)}_p(P(x)) = e \times  \min\limits_{0 \le j \le l}\{ \n_p(a_i(x)) + i  |\lambda|\}.$$  Let $F(x)= A_0(x)+ A_1(x) \Phi_2(x) + \cdots + A_t(x) {\Phi_2(x)}^t$ be the $\Phi_2$-adic development of $F(x)$ and  $\mu_i^{(2)} = \n^{(2)}_p(a_i(x) \Phi_2(x)^i) = \n^{(2)}_p(a_i(x))+ i \n^{(2)}_p(\Phi_2(x))  $, for every $0 \le i \le t$. The $\Phi_2$-Newton polygon of $F(x)$  of second order with respect $\n_p^{(2)}$  is the lower boundary of the convex envelope of the set of points   $ \{  ( i , \mu_i^{(2)}) \, , 0 \le i \le t  \}$ in the Euclidean plane, which we denote   by $N_{2} (F)$. Let $\F^{(2)}= \F_{\ph}[y]/\psi(y)$ be  the  residual field in second order associated to the type $T$.  As in first order, we define the corresponding residual polynomial for each side of  $N_{2} (F)$. We will use the theorem of the polygon and the  theorem of the residual polynomial in second order (\cite[Theorems 3.1 and 3.4]{Nar}). For more details,  we refer  to 
\cite{Nar} and \cite{Narprime} by   Gu\`{a}rdia,  Montes and  Nart.\\
 In order to treat some cases, we will use the following technical result. 
\begin{lemma}\label{technical}
Let $a$ be a rational integer such that $\n_2(a)=k=h \cdot 2^t$ for some  positive  integers $k, h$ and $t$. Let $K=\Q(\th),$ with $\th$ o root of a monic irreducible polynomial $F(x)=x^9+ax+b$. Then we can choose  the integer $a$, such that for every positive integer $i$, there exist a rational integer $\b_i \in \Z$ such that $ \n_2(\b_i) =h $ and $\n_2(\b_i^{2^t}+a)\ge k+ i$.
\end{lemma}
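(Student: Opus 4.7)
The plan is to reduce the statement to a standard $2$-adic approximation problem and then solve it by an explicit Hensel-type lifting. First, I would write $a = 2^k a_2$ with $a_2 = a/2^{\n_2(a)}$ odd, and seek $\b_i$ in the form $\b_i = 2^h \g_i$ with $\g_i \in \Z$ odd (which automatically gives $\n_2(\b_i) = h$). Since $k = h \cdot 2^t$, one has $\b_i^{2^t} = 2^{h \cdot 2^t} \g_i^{2^t} = 2^k \g_i^{2^t}$, so the required inequality $\n_2(\b_i^{2^t} + a) \ge k + i$ is equivalent to the $2$-adic congruence
\[
\g_i^{2^t} \equiv -a_2 \md{2^i}.
\]
Hence it suffices to show that, for a suitable choice of $a$, this congruence is solvable in odd rational integers $\g_i$ for every $i \ge 1$.

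Next, I would invoke the standard description of $2^t$-th powers in $\Z_2^*$: a routine induction on $t$ starting from $(1 + 2^s u)^2 = 1 + 2^{s+1}(u + 2^{s-1} u^2)$ yields $(\Z_2^*)^{2^t} = 1 + 2^{t+2}\Z_2$ for every $t \ge 1$. Consequently, the congruence above is solvable for all $i$ if and only if $-a_2 \equiv 1 \md{2^{t+2}}$, equivalently $a \equiv -2^k \md{2^{k+t+2}}$. The clause \emph{``we can choose the integer $a$''} in the statement is read as imposing exactly this extra congruence on $a$, an open condition compatible with $\n_2(a)=k$.

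Under this choice, the integers $\g_i$ are produced by explicit Hensel-type iteration. Starting from $\g_0 = 1$ gives $\g_0^{2^t} + a_2 = 1 + a_2 \equiv 0 \md{2^{t+2}}$. Inductively, assuming $\g_j \in \Z$ is odd with $\g_j^{2^t} + a_2 = 2^N r_j$ for some $N \ge t+2$ and $r_j \in \Z$, I would set $\g_{j+1} = \g_j + 2^{N-t} c$ with $c \in \{0,1\}$ chosen so that $r_j + \g_j^{2^t - 1} c \equiv 0 \md{2}$ (possible because $\g_j^{2^t - 1}$ is odd). The binomial expansion
\[
\g_{j+1}^{2^t} = \g_j^{2^t} + 2^N \g_j^{2^t - 1} c + \sum_{\ell \ge 2} \binom{2^t}{\ell} \g_j^{2^t - \ell} \bigl( 2^{N-t} c \bigr)^{\ell},
\]
together with the classical estimate $\n_2\bigl( \binom{2^t}{\ell} \bigr) \ge t - \n_2(\ell)$ (which follows from $\binom{2^t}{\ell} = \tfrac{2^t}{\ell}\binom{2^t-1}{\ell-1}$), shows that every term with $\ell \ge 2$ has $2$-valuation at least $N+1$, since $N - t \ge 2$ and $\n_2(\ell) + 1 \le \ell$ for $\ell \ge 2$. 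Hence $\g_{j+1}^{2^t} + a_2 \equiv 2^N(r_j + \g_j^{2^t-1} c) \equiv 0 \md{2^{N+1}}$, and $\g_{j+1}$ remains odd since $2^{N-t} \ge 4$. After finitely many iterations $N$ reaches $i$, and $\b_i = 2^h \g_i$ is the desired integer.

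The main subtle point --- and what I expect to be the only genuine obstacle --- is the $2$-adic bookkeeping at the lifting step. Because the derivative of $x \mapsto x^{2^t}$ has $2$-valuation exactly $t > 0$, the classical Hensel's Lemma does not apply directly, and the correction must be placed at the precise level $2^{N-t}$ so that the leading binomial term balances the residual while all higher-order terms stay negligible. Once the seed congruence $-a_2 \equiv 1 \md{2^{t+2}}$ is secured, the bound $\n_2\bigl( \binom{2^t}{\ell} \bigr) \ge t - \n_2(\ell)$ is exactly what is needed to push the induction through.
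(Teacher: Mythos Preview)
The paper states this lemma without proof and immediately proceeds to the proof of Theorem~1.1, so there is no argument in the paper to compare yours against.

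On its own merits your proof is correct under the reading you adopt. The reduction to the congruence $\gamma_i^{2^t}\equiv -a_2\pmod{2^i}$ is exact, the identification $(\Z_2^*)^{2^t}=1+2^{t+2}\Z_2$ for $t\ge 1$ is standard, and your explicit lifting with the correction placed at level $2^{N-t}$ is carried out cleanly; the needed inequality $(\ell-1)(N-t)\ge 1+\nu_2(\ell)$ for $\ell\ge 2$ and $N-t\ge 2$ does follow from $\nu_2(\ell)+1\le\ell$ exactly as you indicate, and the oddness of $\gamma_{j+1}$ is preserved since $N-t\ge 2$.

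Your interpretation of the clause ``we can choose the integer $a$'' as the extra hypothesis $-a_2\equiv 1\pmod{2^{t+2}}$ (equivalently $a\equiv -2^k\pmod{2^{k+t+2}}$) is essentially forced: without it $-a_2\notin(\Z_2^*)^{2^t}$ and the conclusion already fails at $i=t+2$. It is worth flagging that in the paper's applications (Cases A14--A18 and A20) the integer $a$ is a fixed datum of $F(x)=x^9+ax+b$ and is not visibly adjustable by any change of generator of $K$, so the lemma as invoked there seems to presume more than what any honest proof of this statement can deliver. Your proof is the right one for the statement as it can sensibly be read; the residual issue lies in the lemma's formulation and its use in the paper, not in your argument.
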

Now, we prove our first main theorem.
\begin{proof} [Proof of Theorem \ref{p=2}]\
	\\ \textbf {A1:} $a \equiv 1 \md{2}$ and $b \equiv 1 \md{2}$. In this case,  $\ol{F(x)}$ is irreducible    in  $\F_2[x]$. By Corollary \ref{ore}(1),    $2 \Z_K$ is a prime ideal of $\Z_K$ of residue degree $9$. By Lemma \ref{comindex}, $2$ does not divide $i(K)$.\\
	\textbf {A2:} $a \equiv 0 \md{2}$ and $b \equiv 1 \md{2}$. In this case,  $\ol{F(x)}= \ol{\ph_1(x)\ph_2(x)\ph_3(x)}$  in  $\F_2[x]$, where $\ph_1(x)=x+1, \ph_2(x)=x^2+x+1$ and $\ph_3(x)= x^6+x^3+1$.  By Corollary \ref{ore}(1), we have    $2 \Z_K= \pF_{111}\cdot \pF_{121}\cdot \pF_{131}$ with respective residue degrees  $f(\pF_{111}/2)=1, f(\pF_{121}/2)=2$ and $f(\pF_{131}/2)=6$.
	
\noindent $\bullet$ In  Cases  \textbf{A3-A10}, we have  $2$ divides $b$ and does not divide $a$.  Reducing modulo $2$, we see that $\ol{F(x)}= \ol{\ph_1(x) \ph_2(x)^8}$ in $\F_2[x]$, where $\ph_1(x)=x$ and $\ph_2(x)=x-1$. As $\n_{\ol{\ph_1(x)}}(\ol{F(x)})=1$, by Corollary \ref{corollaryore}(1),  the factor $\ph_1(x)$ of $F(x)$ modulo $2$ provides one prime ideal lying above $2$ of residue degree $1$ and of ramification index $1$, say $\pF_{111}$. It follows that $2\Z_K= \pF_{111}\cdot \aF,$ where $\aF$ is a non-zero ideal of $\Z_K$ provided by the factor $\ph_2(x)$. The $\ph_2$-adic development of $F(x)$ is 
	\begin{eqnarray}\label{devfi2}
	F(x)&=&\ph_2(x)^9+9\ph_2(x)^8+36\ph_2(x)^7+84\ph_2(x)^6+126\ph_2(x)^5+126\ph_2(x)^4+84\ph_2(x)^3 \nonumber \\
	&+&36\ph_2(x)^2+(9+a)\ph_2(x)+1+a+b.
	\end{eqnarray}
Let $\mu_0= \n_2(1+a+b)$	and $\mu_1=\n_2(9+a)$. It follows that  by the above $\ph_2$-adic development of $F(x)$, the $\ph_2$-principal Newton polygon of $F(x)$ is the lower convex hull of the points $(0, \mu_0), (1, \mu_1), (2, 2), (3, 2), (4, 1), (5, 1), (6, 2), (7, 2)$ and $(8, 0)$.\\ 
\textbf{A3:} $(a,b) \in \{(1, 0), (3, 2)\} \md{4}$. In this case, $\mu_0=1$. It follows by $(\ref{devfi2})$ that $N_{\ph_2}^{+}(F)=S_{21}$ has only one side of degree $1$ joining the points $(0, 1)$ and $(8, 0)$ with ramification index $e_{21}=8$. By Corollary \ref{corollaryore}(2), we see that  $2\Z_K = \pF_{111} \cdot \pF_{211}^8$ with $f(\pF_{111}/2)=f(\pF_{211}/2)= 1$. Therefore, $2$ does not divide $i(K)$. \\
\textbf{A4:}  $a \equiv 1 \md{4}$ and $b \equiv 2 \md{4}$. In this case,  $\mu_1=1$ and $\mu_0 \ge 2$. It follows by $(\ref{devfi2})$ that $$N_{\ph_2}^{+}(F)=S_{21}+S_{22}$$ has two sides of degree $1$ each with respective slopes $\l_{21} \le -1$ and $\l_{22}=\frac{-1}{7}$. More Precisely,  $N_{\ph_2}^{+}$ is the lower convex hull of the points $(0, \mu_0), (1, 1)$ and $(8, 0)$ (see FIGURE 2). In this case, the residual polynomials $R_{\l_{21}}(F)(y)$ and $R_{\l_{22}}(F)(y)$ are irreducible over $\F_{ \ph_2} \simeq \F_2$ as they are of degree $1$ each. So, $F(x)$ is $\ph_2(x)$-regular. Hence it is $2$-regular. Applying Theorem  \ref{ore}, we see that 
$$2\Z_K = \pF_{111} \cdot \pF_{211} \cdot \pF_{221}^7,$$ where $\pF_{111}$,  $\pF_{211}$ and  $\pF_{221}$ are three prime ideals of $\Z_K$ of residue degree $1$ each. It follows that $L_2(1)=3$ (there are three prime ideals of $\Z_K$ of residue degree $1$ each lying above the prime $2$). As $N_2(1)=2$ (there are only two monic  irreducible polynomials of degree $1$ in $\F_2[x]$, namely $x$ and $x+1$), $L_2(1)=3 > 2= N_2(1)$. By Lemma \ref{comindex}, $2$ divides $i(K)$.	\begin{figure}[htbp] 
	\centering	
	\begin{tikzpicture}[x=0.75cm,y=1cm]
	\draw[latex-latex] (0,2.5) -- (0,0) -- (8.5,0) ;

	\draw[thick] (0,0) -- (-0.5,0);
	\draw[thick] (0,0) -- (0,-0.5);
	
	\draw[thick,red] (1,-2pt) -- (1,2pt);
	\draw[thick,red] (2,-2pt) -- (2,2pt);
	\draw[thick,red] (3,-2pt) -- (3,2pt);
	\draw[thick,red] (4,-2pt) -- (4,2pt);
	\draw[thick,red] (5,-2pt) -- (5,2pt);
	\draw[thick,red] (6,-2pt) -- (6,2pt);
	\draw[thick,red] (7,-2pt) -- (7,2pt);
	\draw[thick,red] (8,-2pt) -- (8,2pt);
	\draw[thick,red] (-2pt,1) -- (2pt,1);
	\draw[thick,red] (-2pt,2) -- (2pt,2);
	\node at (1,0) [below ,blue]{\footnotesize  $1$};
	\node at (2,0) [below ,blue]{\footnotesize $2$};
	\node at (3,0) [below ,blue]{\footnotesize  $3$};
	\node at (4,0) [below ,blue]{\footnotesize  $4$};
	\node at (5,0) [below ,blue]{\footnotesize  $5$};
	\node at (6,0) [below ,blue]{\footnotesize  $6$};
	\node at (7,0) [below ,blue]{\footnotesize $7$};
	\node at (8,0) [below ,blue]{\footnotesize  $8$};
	\node at (0,1) [left ,blue]{\footnotesize  $1$};
	\node at (0,2) [left ,blue]{\footnotesize  $\mu_0 \ge 2$};
	\draw[thick, mark = *] plot coordinates{(0,2) (1,1) (8,0)};
	\draw[thick, only marks, mark=*] plot coordinates{ (2,2) (3,2) (4,1) (5,1) (6,2) (7,2)};	
	\node at (0.5,1.5) [above  ,blue]{\footnotesize $S_{21}$};
	\node at (3,0.6) [above   ,blue]{\footnotesize $S_{22}$};
	\end{tikzpicture}
	\caption{   $N_{\ph_2}^{+}$ with respect to $\n_2$ when  $a\equiv 1 \md 4$ and $b \equiv 2 \md 4$.}
\end{figure}
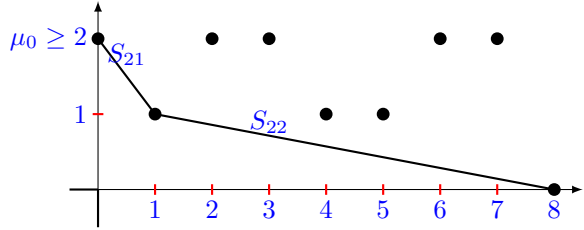\\
\textbf{A5:} $(a,b) \in \{(3, 0), (7, 4)\} \md{8}$.  In this case, we have $\mu_0=2$ and $\mu_1 \ge 2$.   Thus, $N_{\ph_2}^{+}(F)=S_{21}$ has only one of degree $2$ joining $(0, 2), (4, 1)$ and $(8, 0)$ with ramification index $e_{21}=4$.  Further, we have $R_{\l_{21}}(F)(y)=y^2+y+1$ which is irreducible in $\F_{ \ph_2} [y] \simeq \F_2[y]$. Thus, $F(x)$ is $2$-regular. By Using Theorem \ref{ore}, we obtain that $2\Z_K = \pF_{111} \cdot \pF_{211}^4,$ with $f(\pF_{111})=1$ and $f(\pF_{211} )=2$. So, $2$ is not a prime common index divisor of $K$.   \\
\textbf{A6:} $a \equiv 3 \md{8}$ and $b \equiv 4 \md{8}$. In this case, we have  $\mu_0 \ge 3$, $\mu_1=2$. Thus, $$N_{\ph_2}^{+}(F)=S_{21}+S_{22}+S_{23}$$ has three sides of degree $1$ each with respective slopes $\l_{21} \le -1$, $\l_{22}=\frac{-1}{3}$ and $\l_{23}=\frac{-1}{4}$. More explicitly,  $N_{\ph_2}^{+}$ is the lower convex hull of the points $(0, \mu_0), (1, 2)$ and $(8, 0)$ (see FIGURE 3).
The residual polynomials $R_{\l_{2k}}(F)(y), \, i=1,2,3$ are separable as they are of degree $1$ each. Thus, $F(x)$ is $2$-regular. So, Theorem \ref{ore} is applicable. Using this theorem, we have $$2\Z_K =  \pF_{111}\cdot  \pF_{211} \cdot  \pF_{221}^3 \cdot  \pF_{231}^4, $$ with $$f(\pF_{111}/2)=f(\pF_{211}/2)=f(\pF_{221}/2)=f(\pF_{231}/2) =1.$$ Thus, there are four prime ideals of $\Z_K$ of residue degree $1$ each lying above $2$ ($L_2(1)=4$). By Lemma \ref{comindex}, $2$ divides $i(K)$.	\begin{figure}[htbp] 
	\centering	
	\begin{tikzpicture}[x=1cm,y=0.75cm]
	\draw[latex-latex] (0,3.5) -- (0,0) -- (8.5,0) ;

	\draw[thick] (0,0) -- (-0.5,0);
	\draw[thick] (0,0) -- (0,-0.5);
	
	\draw[thick,red] (1,-2pt) -- (1,2pt);
	\draw[thick,red] (2,-2pt) -- (2,2pt);
	\draw[thick,red] (3,-2pt) -- (3,2pt);
	\draw[thick,red] (4,-2pt) -- (4,2pt);
	\draw[thick,red] (5,-2pt) -- (5,2pt);
	\draw[thick,red] (6,-2pt) -- (6,2pt);
	\draw[thick,red] (7,-2pt) -- (7,2pt);
	\draw[thick,red] (8,-2pt) -- (8,2pt);
	\draw[thick,red] (-2pt,1) -- (2pt,1);
	\draw[thick,red] (-2pt,2) -- (2pt,2);
	\node at (1,0) [below ,blue]{\footnotesize  $1$};
	\node at (2,0) [below ,blue]{\footnotesize $2$};
	\node at (3,0) [below ,blue]{\footnotesize  $3$};
	\node at (4,0) [below ,blue]{\footnotesize  $4$};
	\node at (5,0) [below ,blue]{\footnotesize  $5$};
	\node at (6,0) [below ,blue]{\footnotesize  $6$};
	\node at (7,0) [below ,blue]{\footnotesize $7$};
	\node at (8,0) [below ,blue]{\footnotesize  $8$};
	\node at (0,1) [left ,blue]{\footnotesize  $1$};
	\node at (0,3) [left ,blue]{\footnotesize  $\mu_0 \ge 3$};
	\node at (0,2) [left ,blue]{\footnotesize  $2$};
	\draw[thick, mark = *] plot coordinates{(0,3) (1,2) (4,1) (8,0)};
	\draw[thick, only marks, mark=*] plot coordinates{ (2,2) (3,2) (4,1) (5,1) (6,2) (7,2)};	
	\node at (0.5,2.5) [above  ,blue]{\footnotesize $S_{21}$};
	\node at (3,1.2) [above   ,blue]{\footnotesize $S_{22}$};
	\node at (6,0.4) [above   ,blue]{\footnotesize $S_{23}$};
	\end{tikzpicture}
	\caption{   $N_{\ph_2}^{+}$ with respect to $\n_2$ when  $a\equiv 3 \md 8$ and $b \equiv 4 \md 8$.}
\end{figure}
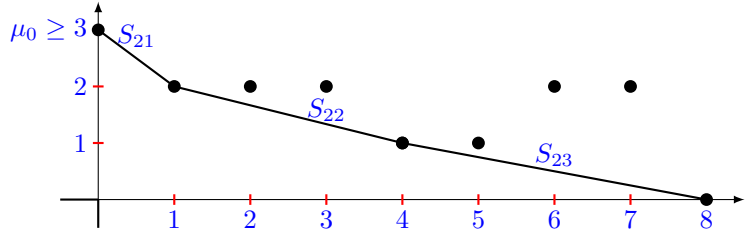\\
\textbf{A7:} $(a,b) \in \{(7, 0), (15, 8)\} \md{16}$. In this case, $\mu_0=3$ and $\mu_1 \ge 3$. According to the $\ph_2$-adic development $(\ref{devfi2})$ of $F(x)$, we get  $N_{\ph_2}^{+}(F)=S_{21}+S_{22}$ has two sides such that $d(S_{21})=2, e_{22}=2, d(S_{22})=1, e_{21}=4$, and $ R_{\l_{21}}(F)(y)  =y^2+y+1, R_{\l_{22}}(F)(y)=1+y \in \F_{ \ph_2}[y]$. Thus, $F(x)$ is $2$-regular. By applying Theorem \ref{ore}, we see that $2\Z_K = \pF_{111} \cdot \pF_{211}^2\cdot \pF_{221}^4,$ with ramification indices $f(\pF_{111}/2)=f(\pF_{221}/2)=1$ and $ f(\pF_{211}/2)=2$.  By Lemma \ref{comindex}, $2$
 does not divide $i(K)$.\\
\textbf{A8:}   $(a,b) \in \{(15, 0), (31, 16)\} \md{32}$. In this case $\mu_0=4$ and $\mu_1=3$. By $(\ref{devfi2})$, we have $$N_{\ph_2}^{+}(F)=S_{21}+S_{22}+S_{23}$$ has three sides joining the points 
$(0, 4), (1, 3), (2, 2), (4, 1)$ and $(8, 0)$ with respective degrees $d(S_{21})=2$ and $d(S_{22})=d(S_{23})=1$ (see FIGURE 4). Further, the residual polynomials are $R_{\l_{21}}(F)(y)=1+y+y^2, \,R_{\l_{22}}(F)(y)=R_{\l_{23}}(F)(y)=1+y $ which are separable  in $\F_{ \ph_2} [y] \simeq \F_2[y]$. Applying Theorem \ref{ore}, we get $$2\Z_K= \pF_{111} \cdot  \pF_{211}\cdot \pF_{221}^2 \cdot\pF_{231}^4, $$ with residue degrees $$f(\pF_{111}/2)=f(\pF_{221}/2)=f(\pF_{231}/2) =1 \, \mbox{and}\,\, f(\pF_{211}/2)=2.$$  It follows that  $L_2(1)=3 > N_2(1)=2$. By Lemma \ref{comindex}, $2$ divides $i(K)$.\\
\textbf{A9:} $(a,b) \in \{(15, 16), (31, 0)\} \md{32}$. In this case,  $\mu_0 \ge 5$ and $\mu_1=3$. Thus, $$N_{\ph_2}^{+}(F)=S_{21}+S_{22}+S_{23}+S_{24}$$ has four  sides of degree $1$ each joining the points $(0, \mu_0), (1, 3), (2, 2), (4, 1)$ and $(8, 0)$ (see FIGURE 4). Moreover, the residual polynomials  $R_{\l_{2k}}(F)(y), \, k=1,2,3,4$  are irreducible  $\F_{ \ph_2} [y]$ as they are of degree $1$ each. So, Theorem \ref{ore} is applicable. Therefore, we have $$2\Z_K =  \pF_{111} \cdot  \pF_{211} \cdot  \pF_{221} \cdot \pF_{231}^2 \cdot \pF_{241}^4, $$ with $$f(\pF_{111}/2)=f(\pF_{211}/2)=f(\pF_{221}/2)=f(\pF_{231}/2) = f(\pF_{241}/2)=1.$$ 
So, $L_2(1)=5 > N_2(1)=2$. Hence, by Lemma \ref{comindex}, $2$ divides $i(K)$.\\
	\begin{figure}[htbp] 
	\centering
	\begin{tikzpicture}[x=0.8cm,y=0.5cm]
	\draw[thick] (-0.5,0) -- (9,0);
	\draw[thick] (0,-0.2) -- (0,5);
	\draw (0,0) node {-};
	\draw (0,0) node[left]{$0$};
	\draw (0,1) node {-};
	\draw (0,1) node[left]{$1$};
	\draw (0,2) node {-};
	\draw (0,2) node[left]{$2$};
	\draw (0,3) node {-};
	\draw (0,3) node[left]{$3$};
	\draw (0,4) node {-};
	\draw (0,4) node[left]{$4$};
	\draw (1,0) node {$\shortmid$};
	\draw (1,0) node[below] {$1$};
	\draw (2,0) node {$\shortmid$};
	\draw (2,0) node[below] {$2$};
	\draw (3,0) node {$\shortmid$};
	\draw (3,0) node[below] {$3$};
	\draw (2,2) node {$\bullet$};
	\draw (4,0) node[below] {$4$};
	\draw (4,0) node {$\shortmid$};
	\draw (5,0) node[below] {$5$};
	\draw (5,0) node {$\shortmid$};
	\draw (6,0) node[below] {$6$};
	\draw (6,0) node {$\shortmid$};
	\draw (7,0) node[below] {$7$};
	\draw (7,0) node {$\shortmid$};
	\draw (8,0) node[below] {$8$};
	\draw (8,0) node {$\shortmid$};
	\draw (4,1) node {$\bullet$};
	\draw(8,0)--(4,1);
	\draw(4,1)--(2,2);
	\draw(0,4)--(2,2);
	\draw (1,3)node{$\bullet$};
	\draw (0,4)node{$\bullet$};
	\draw (4,-2)node{$N_{\ph_1}^{+}(F)$,  $(a,b) \in \{(15, 0), (31, 16)\} \md{32}$};
	\draw[thick] (10,0) -- (19,0);
	\draw[thick] (10,-0.2) -- (10,6);
	\draw (10,1) node {-};
	\draw (10,1) node[left]{$1$};
	\draw (10,2) node {-};
	\draw (10,2) node[left]{$2$};
	\draw (10,3) node {-};
	\draw (10,3) node[left]{$3$};
	\draw (10,5) node {-};
	\draw (10,5) node[left]{$\mu_0$};
	\draw (10,0) node {$\shortmid$};
	\draw (10,0) node[below] {$0$};
	\draw (11,0) node {$\shortmid$};
	\draw (11,0) node[below] {$1$};
	\draw (12,0) node {$\shortmid$};
	\draw (12,0) node[below] {$2$};
	\draw (13,0) node {$\shortmid$};
	\draw (13,0) node[below] {$3$};
	\draw (14,0) node {$\shortmid$};
	\draw (14,0) node[below] {$4$};
	\draw (15,0) node {$\shortmid$};
	\draw (15,0) node[below] {$5$};	
	\draw (16,0) node {$\shortmid$};
	\draw (16,0) node[below] {$6$};
	\draw (17,0) node {$\shortmid$};
	\draw (17,0) node[below] {$7$};	
	\draw (18,0) node {$\shortmid$};
	\draw (18,0) node[below] {$8$};
	\draw (18,0) node {$\bullet$};
	\draw (12,2) node {$\bullet$};
	\draw (14,1)node {$\bullet$};
	\draw (10,5)node {$\bullet$};
	\draw (11,3)node {$\bullet$};
	\draw(14,1)--(18,0);
	\draw (12,2)--(14,1);
	\draw (10,5)--(11,3);
	\draw (11,3)--(12,2);
	\draw (14,-2)node{\small $N_{\ph_2}^{+}(F)$, $(a,b) \in \{(15, 16), (31, 0)\} \md{32}$};
%
	\end{tikzpicture}
		\caption{\large  $N_{\ph_2}^+(F)$ in Cases  \textbf{A8} and \textbf{A9}}
\end{figure}
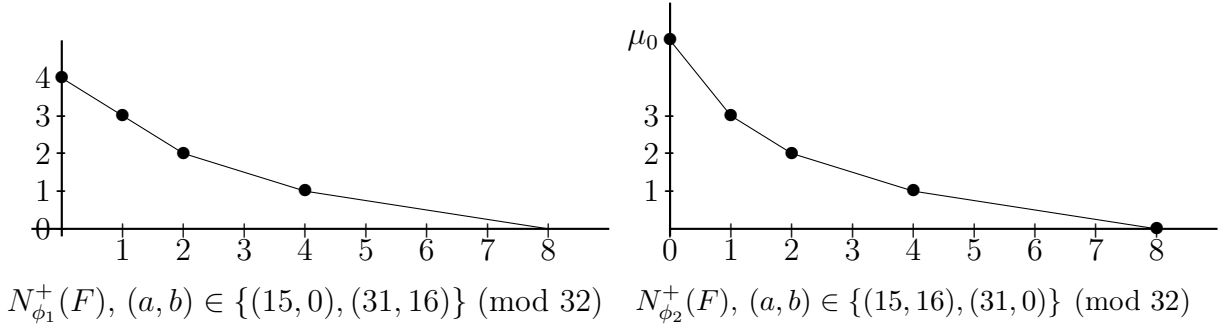

\textbf{A10:} $a \equiv 7 \md{16}$ and $b \equiv 8 \md{16}$. In this case, $\mu_0 \ge 4$ and $\mu_1 \ge 4$. Then, $N_{\ph_2}^{+}(F)=S_{21}+ \cdots+ S_{2, \, t-1}+ S_{2, t}$ has $t$ sides with $t \in \{3,4\}$. The last two sides have degree $1$ each joining the points $(2,2), (4,1)$ and $(8,0)$ with ramification indices equal  $ e_{1, \, t-1}=2$ and $ e_{1, t}=4$  (see FIGURE 4). Applying 	Theorem \ref{ore} and using the Fundamental Equality,  we see that $$2\Z_K =  \pF_{111} \cdot \pF_{2, t-1}^2 \cdot \pF_{2, t}^4 \cdot  \aF, $$ where $ \pF_{111}$,  $\pF_{2, t-1}$ and  $\pF_{2, t}$ are three prime ideals of residue degree $1$ each and $\aF$ is a non-zero ideal of $\Z_K$ provided by the other side(s)  of $N_{\ph_2}^{+}(F)$. Thus, $L_3(1) \ge 3 > 2=N_2(1)$. By Lemma \ref{comindex}, $2$ divides $i(K)$. 
 	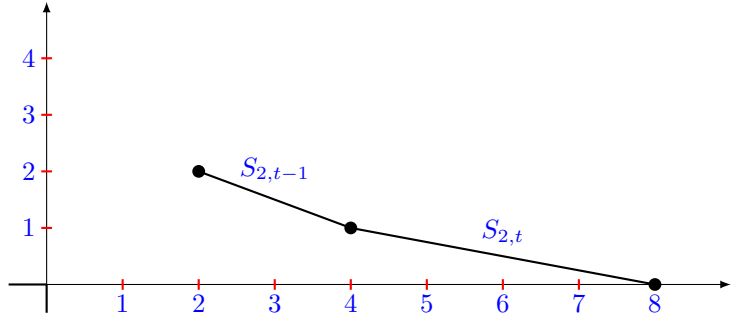
\begin{figure}[htbp] 
 	\centering
 	\begin{tikzpicture}[x=1cm,y=0.75cm]
 	\draw[latex-latex] (0,5) -- (0,0) -- (9,0) ;

 	\draw[thick] (0,0) -- (-0.5,0);
 	\draw[thick] (0,0) -- (0,-0.5);
 	
 	\draw[thick,red] (1,-2pt) -- (1,2pt);
 	\draw[thick,red] (2,-2pt) -- (2,2pt);
 	\draw[thick,red] (3,-2pt) -- (3,2pt);
 	\draw[thick,red] (4,-2pt) -- (4,2pt);
 	\draw[thick,red] (5,-2pt) -- (5,2pt);
 	\draw[thick,red] (6,-2pt) -- (6,2pt);
 	\draw[thick,red] (7,-2pt) -- (7,2pt);
 	\draw[thick,red] (8,-2pt) -- (8,2pt);
 	\draw[thick,red] (-2pt,1) -- (2pt,1);
 	\draw[thick,red] (-2pt,2) -- (2pt,2);
 	\draw[thick,red] (-2pt,3) -- (2pt,3);
 	\draw[thick,red] (-2pt,4) -- (2pt,4);	
 	\node at (1,0) [below ,blue]{\footnotesize  $1$};
 	\node at (2,0) [below ,blue]{\footnotesize $2$};
 	\node at (3,0) [below ,blue]{\footnotesize  $3$};
 	\node at (4,0) [below ,blue]{\footnotesize  $4$};
 	\node at (5,0) [below ,blue]{\footnotesize  $5$};
 	\node at (6,0) [below ,blue]{\footnotesize  $6$};
 	\node at (7,0) [below ,blue]{\footnotesize $7$};
 	\node at (8,0) [below ,blue]{\footnotesize  $8$};
 	\node at (0,1) [left ,blue]{\footnotesize  $1$};
 	\node at (0,2) [left ,blue]{\footnotesize  $2$};
 	\node at (0,3) [left ,blue]{\footnotesize  $3$};
 	\node at (0,4) [left ,blue]{\footnotesize  $4$};
 	\draw[thick, mark = *] plot coordinates{ (2,2) (4,1) (8,0)};
 	\draw[thick, only marks, mark=*] plot coordinates{  };	
 	\node at (3,1.6) [above  ,blue]{\footnotesize $S_{2,t-1}$};
 	\node at (6,0.5) [above  ,blue]{\footnotesize $S_{2, t}$};
 	\end{tikzpicture}
 	\caption{  $N_{\ph_2}^{+}(F)$,  when $a \equiv 7 \md{16}$ and $b \equiv 8 \md{16}$}
 \end{figure}	\\
$\bullet$ From Case \textbf{A11}, we have  $2$ divides both  $a$ and $b$, $\ol{F(x)}=\ol{\ph_1(x)}^9$ in $\F_2[x]$, where $\ph_1(x)=x$.  It follows that the $\ph_1$-principal Newton polygon with respect $\n_2$; $N_{\ph_1}^{+}(F)$,  is the lower convex hull of the points $(0, \n_2(b)), (1, \n_2(b))$ and $(9, 0)$. Notice that the  finite residual field $\F_{\ph_1} \simeq \F_2$.  Recall also  that  by Hypothesis $(\ref{Hypothese})$, we have  $\n_2(a)<8$ or $\n_2(b)<9$. \\
\textbf{A11:}  $\n_2(b) \in\{1, 2, 4, 5, 7, 8\}$ and  $9
\n_2(a)>8\n_2(b)$. In this case, $N_{\ph_1}^{+}(F)=S_{11}$ has only one side of degree $1$,  joining the points $(0, \n_2(b))$ and $(9, 0)$. Its ramification index equals $9$.  By Corollary \ref{corollaryore}(2), we see that $2\Z_K=\pF_{111}^9,$ where $\pF_{111}$ is a prime ideal of $\Z_K$ of residue degree $1$. Therefore, by Lemma \ref{comindex}, $2$ does not divide $i(K)$. \\
\textbf{A12:} $\n_2(b) \in \{3, 6\}$ and  $9 \n_2(a)>8\n_2(b)$. Here, $N_{\ph_1}^{+}(F)=S_{11}$ has only one side of degree $3$,  joining the points $(0, \n_2(b))$ and $(9, 0)$. Its ramification index equals $3$. Further, $R_{\l_{11}}(F)(y)=y^3-1=(y-1)(y^2+y+1) \in \F_{\ph_1}[y]$. Thus, $F(x)$ is $2$-regular. Applying Theorem \ref{ore}, we get $3\Z_K= \pF_{111}^3\cdot \pF_{112}^3$, where $f(\pF_{111}/2)=1$ and $f(\pF_{112}/2)=2$. Hence, by Lemma \ref{comindex}, $2$ does not divide $i(K)$. \\
$\bullet$ From Case \textbf{A13},  we have  $9 \n_2(a)<8\n_2(b)$. Thus,  $N_{\ph_1}^{+}(F)=S_{11}+S_{12}$ has two sides joining the points $(0, \n_2(b)), (1, \n_2(b))$ and $(9, 0)$. The first side; $S_{11}$,  has degree $1$. So, $R_{\l_{11}}(F)(y)$ is separable. It follows that $2\Z_K = \pF_{111}\cdot\aF,$ where $\pF_{111}$ is a prime ideal     of $\Z_K$ of residue degree $1$, and $\aF$ is a non-zero ideal of $\Z_K$ provided by  the second side of $N_{\ph_1}^{+}(F)$; $S_{12}$.    In what follows, we determine the number of  prime  ideals of $\Z_K$ lying above $2\Z_K$, provided by the side   $S_{12}$.         \\
\textbf{A13:} $\n_2(a) \in\{1, 3,  5, 7\}$ and  $9 \n_2(a)<8\n_2(b)$. In this case $d(S_{12})=1$ and $e_{12}=8$. By corollary \ref{corollaryore}(2), we obtain that $2\Z_K= \pF_{111}\cdot \pF_{121}^8,$ where $\pF_{111}$ and $\pF_{121}$ are two prime ideals     of $\Z_K$ of residue degree $1$ each. Consequently, by Lemma \ref{comindex}, $2$ is not a common index divisor of $K$. \\
\textbf{A14:}  $\n_2(a)=2$ and  $\n_2(b) \in \{3, 4\}$.  Here, $d(S_{12})=2, e_{12}=4$ and $\l_{12}=\frac{-1}{4}$. But, $R_{\l_{12}}(F)(y)=(\psi(y))^2, $ where $\psi(y)=y-1$. Thus, $R_{\l_{12}}(F)(y)$ is not separable in $\F_{\ph_1}[y]$. In this cases Ore's Theorem (Theorem \ref{ore}) is not applicable.  But, we have the partial following information: $2\Z_K=\pF_{111}\cdot \aF^4, $  where $\pF_{111}$ is a prime ideal     of $\Z_K$ of residue degree $1$, and $\aF$ is a non-zero ideal of $\Z_K$. To factorize the ideal $\aF$, we use Newton polygon in second order.    Let   $\Phi_2(x)=x^4-\b,$ with $\b \in \Z$ such that $\n_2(\b)=2$. By Lemma \ref{technical}, we choose this $\b$ such that $\n_2(\b^2+a) \ge 5$. Consider the data $$T=(\ph_1(x), \l_{12}, \psi(y); \Phi_2(x)).$$ One can check that $T$ is a type of order 2. Let $\n_2^{(2)}$ be the valuation of second order with respect to $T$.  The $\Phi_2$-adic development of $F(x)$ is 
\begin{equation}\label{Deva=2}
F(x)=x\Phi_2(x)^2+2 \b x\Phi_2(x)+(\b^2+a)+b. 
\end{equation}
Let $A_0(x)=(\b^2+a)+b.$
By using  \cite[Theorem 2.11 and Proposition 2.7]{Nar}, we have
\begin{itemize}
	\item $\n_2^{(2)}(x)= 1,$
	\item $\n_2^{(2)}(\Phi_2)=4,$
	\item $\n_2^{(2)}(x\Phi_2(x)^2)=9,$
	\item $ \n_2^{(2)}(2 \b x\Phi_2(x))=13,$
	\item $ \n_2^{(2)}(A_0(x))=\min \{4\n_2(\b^2+a)+1, 4\n_2(b)\} \ge \min\{21, 4 \n_2(b)\}.$
\end{itemize}
Thus, $N_2(F)$;  the $\Phi_2$-Newton polygon  of second order with respect $\n_2^{(2)}$, is the lower convex hull of the points $(0, \mu_0^{(2)}), (1, 13)$ and $(2, 9)$, where $\mu_0^{(2)}= \n_2^{(2)}(A_0(x))$. Note also that  the residual field $\F^{(2)}= \F_{\ph_1}[y]/\psi(y)$  isomorphic to $\F_2$.\\
If $\n_2(b)=3$. Thus, $\mu_0^{(2)}=12$.   It follows that $N_2(F)=S_{11}^{(2)}$ has only one side of degree $1$ joining the points $(0, 12)$ and $(2, 9)$ with slope $\l_{11}^2=\frac{-1}{2}$. Further, we have $R_{\l_{11}^2}^{(2)}(F)(y)=z+1 \in \F^{(2)}[z]$. Thus,  $F(x)$ is regular with respect $\n_2^{(2)}$.  Using Theorems 3.1 and 3.4 of \cite{Nar}, we see that $2\Z_K=\pF_{111} \cdot \pF_{121}^8,$  where  $\pF_{111}$ and $\pF_{121}$ are two prime ideals     of $\Z_K$ of residue degree $1$ each. Similarly, if $\n_2(b)=4$, then $\mu_0^{(2)}=16$. Also, we see that the form of factorization of $2\Z_K$ is $[1, 1^8]$.  Consequently, by Lemma \ref{comindex}, $2$ does divides $i(K)$. \\
\textbf{A15:} $\n_2(a)=2$ and  $\n_2(b)\ge 5$. Here, the polygon $N_{\ph_1}^{+}(F)$ is the same as in the above case. Let $\Phi_2(x)$ be as in Case \textbf{A14}. In this case, we have $\mu_0^{(2)} \ge 20$. It follows by $(\ref{Deva=2})$ that   $N_2(F)=S_{11}^{(2)}+S_{11}^{(2)} $ has two sides of degree $1$ each. By applying  Theorems 3.1 and 3.4 of \cite{Nar}, we obtain that $2\Z_K=\pF_{111}\cdot \pF_{121}^4\cdot\pF_{122}^4,$ where $\pF_{111}, \pF_{121}$ and $\pF_{122}$ are three prime ideals     of $\Z_K$ of residue degree $1$ each.   So, we have $L_2(1)=3>2=N_2(1)$. Hence, by Lemma \ref{comindex}, $2$ divides $i(K)$. \\
\textbf{A16:} $\n_2(a)=6$ and  $\n_2(b)\in \{7, 8\}$. In this case, we have $d(S_{12})=2$  and $e_{12}=4$ but, $\l_{12}=\frac{-3}{4}$ . Also, we have  $R_{\l_{12}}(F)(y)=(\psi(y))^2, $ where $\psi(y)=y-1$.   Unfortunately, we cannot apply  Theorem \ref{ore}. So, we analyze Newton polygons of second order.  Let   $\Phi_2(x)=x^4-\b$  with $\b \in \Z$ such that $\n_2(\b)=3$. By Lemma \ref{technical}, we choose this $\b$ such that $\n_2(\b^2+a) \ge 9$.  Consider the data $$T=(\ph_1(x), \l_{12}, \psi(y); \Phi_2(x)).$$ One can verify that  $T$ is a type of order 2. Let   $\n_2^{(2)}$ be the valuation of second order with respect to $T$.  The $\Phi_2$-adic development of $F(x)$ is
\begin{equation}\label{Devv(a)=6secondorder}
F(x)=x\Phi_2(x)^2+2 \b x\Phi_2(x)+(\b^2+a)+b. 
\end{equation}
Let $A_0(x)=(\b^2+a)+b. $
Applying Theorem 2.11 and Proposition 2.7  of   \cite{Nar}, we get
\begin{itemize}
	\item $\n_2^{(2)}(x)=3,$
	\item $\n_2^{(2)}(\Phi_2)=12,$
	\item $\n_2^{(2)}(x\Phi_2(x)^2)=27,$
	\item $ \n_2^{(2)}(16x\Phi_2(x))=31,$
	\item $ \n_2^{(2)}(A_0(x))=\min \{4\n_2(\b^2+a)+3, 4\n_2(b)\} \ge \min \{37, 4\n_2(b)\}.$
\end{itemize}
It follows that  $N_2(F)$  is the lower convex hull of the points $(0, \mu_0^{(2)}), (1, 31)$ and $(2, 27)$, where $\mu_0^{(2)}= \n_2^{(2)}(A_0(x))$. \\
  If $\n_2(b)=7$, then $\mu_0^{(2)}=4\n_2(b)=28$. Thus,   $N_2(F)=S_{11}^{(2)}$ has only one side of degree $1$ joining the points $(0, 28)$ and $(2, 27)$ with ramification index  $e_{11}^2=2$. Thus, $F(x)$ is regular with respect to $\n_2^{(2)}$. By Theorems 3.1 and 3.4 of \cite{Nar}, we see that $2\Z_K=\pF_{111} \cdot \pF_{121}^8,$  where  $\pF_{111}$ and $\pF_{121}$ are two prime ideals     of $\Z_K$ of residue degree $1$ each. Similarly, if $\n_2(b)=8$, we see that   $N_2(F)$ has again one side of degree $1$. By Lemma \ref{comindex}, $2$ does divides $i(K)$. \\
\textbf{A17:}  $\n_2(a)=6$ and  $\n_2(b)\ge 9$.  Let us  use  the same key polynomial $\Phi_2(x)$ as in the above case. In this case, we have   $\mu_0^{(2)} \ge 36$. It follows that  $N_2(F)=S_{11}^{(2)}+S_{12}^{(2)}$ has  two sides of degree $1$ each joining the points $(0, \mu_0^{(2)}), (1, 31)$ and $(2, 27)$. Their attached residual polynomials are separable as they are linear  polynomials of $\F^{(2)}[y]$. Applying Theorems 3.1 and 3.4 of \cite{Nar}, we see that $2\Z_K=\pF_{111} \cdot \pF_{121}^4\cdot \pF_{122}^4,$ where $\pF_{111}, \pF_{121}$ and  $\pF_{122}$ are three prime ideals of $\Z_K$ of residue degree $1$ each. Hence, by Lemma \ref{comindex}, $2$ divides $i(K)$. \\
\textbf{A18:}  $\n_2(a)=4$ and  $\n_2(b)\in \{5, 6\}$. In this case, we have  $d(S_{12})=4, e_{12}=2, \l_{12}=\frac{-1}{2}$.  Further, $R_{\l_{11}}(F)(y)=(y-1)^4 \in \F_{\ph_1}[y]$.  So, Ore's program does not able to factorize $2\Z_K$. So, let us pass to second order Newton polygons. Let $\Phi_2=x^2-\b$ with $\n_2(\b)=1$. Using  Lemma \ref{technical}, we choose  $\b$ such that $\n_2(\b^4+a) \ge 8$. Let $T=(\ph_1(x), \l_{11}, \psi(y); \Phi_2(x))$. One can verify that   the data $T$ is a type of order 2.  Let   $\n_2^{(2)}$ be the valuation of second order with respect to $T$.  The $\Phi_2$-adic development of $F(x)$ is
\begin{equation}\label{Devv(a)=4secondorder}
F(x)=x\Phi_2(x)^4+4\b x\Phi_2(x)^3+6x\b^2 \Phi_2(x)^2+4x \b \Phi_2(x)+(\b^4+a)x+b. 
\end{equation}
Let $A_0(x)=(\b^4+a)+b. $
Applying Theorem 2.11 and Proposition 2.7  of   \cite{Nar}, we get
\begin{itemize}
	\item $\n_2^{(2)}(x)=1,$
	\item $\n_2^{(2)}(\Phi_2)=2,$
	\item $\n_2^{(2)}(x\Phi_2(x)^4)=9,$
	\item $ \n_2^{(2)}(4x\b\Phi_2(x)^3)=13,$
	\item $ \n_2^{(2)}(6x\b^2\Phi_2(x)^2)=11,$
	\item $ \n_2^{(2)}(4x\b^3\Phi_2(x))=13,$
	\item $ \n_2^{(2)}(A_0(x))=\min \{2\n_2(\b^4+a)+1, 2\n_2(b)\} \ge \min \{17, 2 \n_2(b) \}.$
\end{itemize}
Let $\mu_0^{(2)}= \n_2^{(2)}(A_0(x))$. Then $N_2(F)$  is the lower convex hull of the points $(0, \mu_0^{(2)}), (1, 13), (2, 11)$ and $(4, 9)$. \\
 If  $\n_2(b)=5$, then  $\mu_0^{(2)}=10$.  It follows by $(\ref{Devv(a)=4secondorder})$ that $N_2(F)=S_{11}^{(2)}$ has only one side of degree $1$ joining the points $(0, 10)$ and $(4, 9)$. It ramification index $e_{11}^{(2)}=4$. Therefore, the form of the factorization of $2\Z_K$ is $[1, 1^8]$. By Lemma \ref{comindex}, $2$ does not divide $i(K)$. Similarly for $\n_2(b)=6$. \\
\textbf{A19:}  $\n_2(a)=4$ and  $\n_2(b)=7$. Let $\psi_1(x)=x-b$. Up to replace the lift of $\ph_1(x)$ by $\psi_1(x)$. Note that $\ol{\ph_1(x)}=\ol{\psi_1(x)}$ and $F(x) \equiv \ol{\psi_1(x)}^9$ in $\F_3[x]$.  Recall also that Ore's Theorem does not depend the monic irreducible liftings of the monic irreducible factors of $F(x)$ modulo the considered prime. Using Taylor expansion, the $\psi_1$-adic development of $F(x)$ is given by:
\begin{equation}\label{devpsiabeven}
F(x)=F(b)+F^{'}(b)\psi_1(x)+ \cdots  + \frac{F^{(8)}(b)}{8!}\psi_1(x)^8+\psi_1(x)^9.
\end{equation}
Further, we have $\n_2(F(b))=7$ and $ \n_2(\frac{F^{(k)}(b)}{k!}) \ge 7 $ for  $k=1, \ldots, 8$. It follows by $(\ref{devpsiabeven})$ that the polygon $N_{\psi_1}^{+}(F)=S_{11}$ has only one side of degree $1$ joining the points $(0, 7)$ and $(9, 0)$.  By Corollary \ref{ore}(2), $2\Z_K=\pF_{111}^9,$ where $\pF_{111}$ is a prime ideal of $\Z_K$ of residue degree $1$.\\
\textbf{A20:}  $\n_2(a)=4$ and  $\n_2(b) \ge 8$.  The polygon $N_{\ph_1}^{+}(F)$ is the same as in Case \textbf{A18}. Let us the key polynomial $\Phi_2(x)$ be as in Case \textbf{A18}. In this case     $\mu_0^{(2)}\ge 16$.  By $(\ref{Devv(a)=4secondorder})$, the polygon  $N_2(F)=S_{11}^{(2)}+S_{12}^{(2)}+S_{13}^{(2)}$ has three sides such that $d(S_{11}^{(2)})=d(S_{12}^{(2)})=1$ and $d(S_{13}^{(2)})=2$. The residual polynomials $R_{\l_{11}^2}^{(2)}(F)(y)$ and $R_{\l_{12}^2}^{(2)}(F)(y)$ as they are of degree $1$. Therefore,  $2\Z_K= \pF_{111}\cdot \pF_{121}^2\cdot\pF_{131}^2 \cdot \aF^2, $ where  $\pF_{111}, \pF_{121}$ and $\pF_{131}$ are three prime ideals of $\Z_K$ of residue degree $1$ each, and $\aF$ is a non-zero ideal of $\Z_K$ provided by the side $S_{13}^{(2)}$. By using the Fundamental Equality $(\ref{FE})$,  the factorization of $2\Z_K$ has  one of the following forms: $[1, 1^2, 1^2, 1^2, 1^2], [1, 1^2, 1^2, 1^4]$ or $[1, 1^2, 1^2, 2^2]$.  So, we have $L_2(1) \ge 3 > 2= N_2(1)$. Hence, by Lemma \ref{corollaryore}, $2$ divides $i(K)$. In particular, $K$ is not monogenic.
\end{proof}
Now, let us prove Theorem \ref{p=3}.
\begin{proof}[Proof of Theorem \ref{p=3}]\
\\ \textbf{C1:}	$a \equiv 1\md{3} $ and $b \equiv 1 \md{3}$. In this case, $\ol{F(x)}= \ol{\ph_1(x)\ph_2(x)\ph_3(x)}$ in $\F_3[x]$, where $\ph_1(x)=x-1$, $\ph_2(x)=x^4-x^3+x^2+1$ and $\ph_3(x)=x^4-x^3-x^2+x-1$. By Corollary \ref{corollaryore}(1), we get $3\Z_K=\pF_{111} \cdot \pF_{211} \cdot \pF_{311},$  with $f(\pF_{111}/3)=1$ and $f(\pF_{211}/3)=f(\pF_{311}/3)=4$. By Lemma \ref{comindex}, $3$ does not divide $i(K)$.\\
\textbf{C2:}	 $a \equiv 1\md{3} $ and $b \equiv -1 \md{3}$. In this case, $\ol{F(x)}= \ol{\ph_1(x)\ph_2(x)\ph_3(x)}$ in $\F_3[x]$, where $\ph_1(x)=x+1$, $\ph_2(x)=x^4+x^3+x^2+1$ and $\ph_3(x)=x^4+x^3-x^2-x-1$. By Corollary \ref{corollaryore}(1), as in  Case \textbf{C1}, the form of the factorization of $3\Z_K$ is $[1, 4, 4]$. Hence, $3$ is not a common index divisor of $K$.\\
\textbf{C3:} $a \equiv -1\md{3} $ and $b \equiv 1 \md{3}$.  In this case, $\ol{F(x)}= \ol{\ph_1(x)\ph_2(x)}$ in $\F_3[x]$, where $\ph_1(x)=x^3-x-1$ and  $\ph_2(x)=x^6+x^4+x^3+x^2-x-1$. Using Corollary \ref{corollaryore}(1), $3\Z_K= \pF_{111}\cdot \pF_{221}$ with $f(\pF_{111}/3)=3$ and $f(\pF_{211}/3)=6$. Hence, $3$ does not divide $i(K)$.\\
\textbf{C4:}   $a \equiv -1\md{3} $ and $b \equiv -1 \md{3}$. Similarly to the case \textbf{C3}, the form of the factor
ization of $3\Z_K$ is $[3, 6]$.\\
\textbf{C5:} $a \equiv 1\md{3} $ and $b \equiv 0 \md{3}$. In this case, $\ol{F(x)}= \ol{x(x^4-x^2-1)(x^4+x^2-1)}$ in $\F_3[x]$. Using Corollary \ref{corollaryore}(1), the form of the factorization of $3\Z_K$ is $[1, 4, 4]$.\\
\textbf{C6:} $a \equiv -1\md{3} $ and $b \equiv 0 \md{3}$. In this case, $$\ol{F(x)}= \ol{x(x-1)(x+1)(x^2+1)(x^2+x-1)(x^2-x-1)}$$ in $\F_3[x]$. Using Corollary \ref{corollaryore}(1), the form of the factorization of $3\Z_K$ is $[1, 1, 1, 2, 2, 2 ]$. Consequently, $3$ does not divide $i(K)$. \\
$\bullet$ In Cases \textbf{C7-C13}, $3$ divides both $a$ and $b$. Thus, $\ol{F(x)}= \ol{\ph_1(x)}$ in $\F_3[x]$, where $\ph_1(x)=x$. It follows that $N_{\ph_1}^{+}(F)$ is the lower convex hull of the points $(0, \n_3(b)), (1, \n_2(a))$ and $(9, 0)$. Note also that  by the hypothesis $(\ref{Hypothese})$, the conditions   $$\n_3(a) \ge  8 \,\, \mbox{and}\,\, \n_3(b) \ge 9$$ are not satisfied simultaneously. So, the conditions D7-D13 covers all cases when $3$ divides both $a$ and $b$.\\
\textbf{C7:}  $9 \n_3(a)<8 \n_3(b)$ and $\n_2(a) \in \{1, 3,  5, 7\}$. In this case, $N_{\ph_1}^{+}(F)=S_{11}+S_{12}$ has two sides of degree $1$ each, with respective  ramification indices $e_{11}=1$ and $e_{12}=8$. Using Corollary  \ref{corollaryore}(2), $3 \Z_K = \pF_{111}\cdot  \pF_{121}^8 $ with $f(\pF_{111}/3)=f(\pF_{121})=1$. \\
\textbf{C8:}   $9 \n_3(a)<8 \n_3(b),$ $\n_2(a) \in \{2, 6\}$ and $a_3 \equiv 1 \md{3}$.   In this case, $N_{\ph_1}^{+}(F)=S_{11}+S_{12}$ has two sides with respective degrees $d(S_{11})=1$ and $d(S_{12})=2$. Further, we have $R_{\l_{12}}(F)(y)= y^2+1$ which is irreducible in $\F_{\ph_1}[y] \simeq \F_3[y]$. So, Theorem \ref{ore} is applicable. Therefore, $3\Z_K= \pF_{111} \cdot  \pF_{121}^4$ with respective ramification indices $f(\pF_{111}/3)=1$ and $f(\pF_{121}/3)=2$.\\
\textbf{C9:} $9 \n_3(a)<8 \n_3(b),$ $\n_2(a) \in \{2, 6\}$ and $a_3 \equiv -1 \md{3}$. The polygon $N_{\ph_1}^{+}(F)$  is the same 	as in Case \textbf{ C8}.  But here, we have  $R_{\l_{12}}(F)(y)= y^2-1 =(y-1)(y+1)$ which is separable in in $\F_{\ph_1}[y]$. Thus, $F(x) $  is $3$-regular. By Theorem,  $3\Z_K= \pF_{111} \cdot  \pF_{121}^4 \cdot \pF_{122}^4$, where $\pF_{111}, \,   \pF_{121}$ and $ \pF_{122}$ are three prime ideals of $\Z_K$ of residue degree $1$ each.\\
\textbf{C10:} $\n_3(a)=4, \, \n_3(b) \ge 5$ and $b_3 \equiv 1 \md{3}$. In this case, 
$N_{\ph_1}^{+}(F)=S_{11}+S_{12}$ has two sides with respective degrees $d(S_{11})=1$ and $d(S_{12})=4$. We have also $e_{12}=2$ and $R_{\l_{12}}(F)(y)= y^4+1=(y^2+y-1)(y^2-y-1)$ which is separable in $\F_{\ph_1}[y]$. Thus, $F(x)$ is $3$-regular. By Theorem \ref{ore}, we obtain that  $3\Z_K= \pF_{111} \cdot  \pF_{121}^2 \cdot \pF_{122}^2$, where   $\pF_{121}$ and $ \pF_{122}$ are three prime ideals of $\Z_K$ of residue degree $2$ each, and  $\pF_{111}$ is a prime ideal of $\Z_K$ of residue degree $1$. \\
\textbf{C11:}  $\n_3(a)=4, \, \n_3(b) \ge 5$ and $b_3 \equiv -1 \md{3}$. The polygon $N_{\ph_1}^{+}(F)$  is the same 	as in Case \textbf{C10}.  Further, we have  $R_{\l_{12}}(F)(y)= y^4-1 =(y-1)(y+1)(y^2+1)$ which is separable in in $\F_{\ph_1}[y]$. So, $F(x) $ is $3$-regular. Therefore, $$3\Z_K = \pF_{111}\cdot \pF_{121}^2 \cdot \pF_{122}^2\cdot \pF_{123},$$ with ramification indices $f(\pF_{111}/3)=f(\pF_{121}/3)=f(\pF_{122}/3)=1$ and $f(\pF_{123}/3)=2$.\\
\textbf{C12:} $9 \n_3(a)>8 \n_3(b)$ and $\n_3(b) \in \{1, 2, 4, 5, 7, 8\}$. In this case, $N_{\ph_1}^{+}(F)=S_{11}$ has only one side of degree $1$ and ramification index equals $9$. By Corollary \ref{corollaryore}(2), we see that $3\Z_K = \pF_{111}^9$ where $\pF_{111}$ is a prime ideal of $\Z_K$ of residue degree $1$. \\
\textbf{C13:}    $9 \n_3(a)>8 \n_3(b)$ and $\n_3(b) \in \{3, 6\}$. In this case, $N_{\ph_1}^{+}(F)=S_{11}$ has only one side of degree $3$ and ramification index  $e_{11}=3$. Thus, $3\Z_K = \aF^3$, where $\aF$ is a non-zero ideal of $\Z_K$. According to the Fundamental Equality $(\ref{FE})$,  the form of factorization of $\aF$ belong to the set $\{ [1^3], [1, 1, 1], [1, 1^2], [1, 2]\}$. Thus, the factorization  of  $3\Z_K$  has one of the following forms:  $[1^9], [1^3, 1^3, 1^3], [1^3, 1^6]$ or $ [1^3, 2^3]$. Consequently, by Lemma \ref{comindex}, $3$ is not a common index divisor of $K$.\\
$\bullet$ In Cases \textbf{C14-C34}, we have $a \equiv 0 \md{3}$ and $b \equiv -1 \md{3}$. Thus,  $\ol{F(x)}=\ol{\ph_1(x)^9}$ in $\F_3[x]$, where $\ph_1(x)=x-1$. The $\ph_1$-adic development of $F(x)$ is 

\begin{equation}\label{devf1p3}
F(x)= 1+a+b+(9+a)\ph_1(x)+ \sum_{j=2}^{9} \dbinom{9}{j} \ph_1(x)^j.
\end{equation}
Let $\n=\n_3(1+a+b)$ and $\mu=\n_3(9+a)$. Note that $\n_3(\dbinom{9}{j})=2-\n_3(j)$ for every $j=2, \ldots , 9$. It follows, by the above $\ph_1$-adic development  of $F(x)$, that $N_{\ph_1}^{+}(F)$ is is the lower convex hull of the points $(0, \n), (1, \mu), (3,1)$ and $(9, 0)$. \\
\textbf{C14:} $(a, b) \in \{(0, 2), (0, 5), (3, -1), (3, 2), (6, -1), (6, 5)\}\md{9}$. In this case $\n=1$. By $(\ref{devf1p3})$, $N_{\ph_1}^{+}(F)$ has  only one side of degree $1$ joining the points $(0, 1) $  and $(9, 0)$ with ramification index $1$. By Corollary \ref{corollaryore}(2), $3\Z_K = \pF_{111}^9$, where $\pF_{111}$ is a prime ideal of $\Z_K$ of residue degree $1$.\\
\textbf{C15:} $(a, b) \in \{(3, 5), (6, 2)\}\md{9}$. In this case, $\n \ge 2$ and $\mu =1$. It follows that $N_{\ph_1}^{+}(F)=S_{11}+S_{12}$ has two sides of degree $1$ each joining the point $(0, \n), (1, 1)$ and $(9, 0)$. Their respective ramification indices equal $e_{11}=1$ and $e_{12}=8$. By Corollary \ref{corollaryore}(2), $3\Z_K=\pF_{111}\cdot \pF_{121}^8$, where $\pF_{111}$ and $ \pF_{121}$ are two prime of $\Z_K$ ideals of residue degree $1$ each.\\
\textbf{C16:}   $(a, b) \in \{(0, 8), (0, 18), (9, -1), (9, 8), (18, -1), (18, 17)\}\md{27}$. Here, we have  $\n=2 $ and $\mu \ge 2$. Thus,  $N_{\ph_1}^{+}(F)=S_{11}+S_{12}$ has two sides of degree $1$ each joining the point $(0, 2), (3, 1)$ and $(9, 0)$ with  $e_{11}=3$ and $e_{12}=6$. By Corollary \ref{corollaryore}(2), $3\Z_K=\pF_{111}^3\cdot \pF_{121}^6$, where $\pF_{111}$ and $ \pF_{121}$ are two prime  ideals of $\Z_K$ of residue degree $1$ each.\\
\textbf{C17:}   $(a, b) \in \{(0, -1), (9, 17)\}\md{27}$. Here, we have  $\n \ge 3 $ and $\mu = 2$. It follows that,  $N_{\ph_1}^{+}(F)=S_{11}+S_{12}+S_{13}$ has three sides of degree $1$ each joining the point $(0, \n), (1, 2) (3, 1)$ and $(9, 0)$ with  $e_{11}=1$,   $e_{12}=2$ and $e_{13}=6$. By Corollary \ref{corollaryore}(2), $3\Z_K=\pF_{111}^3 \cdot \pF_{121}^2 \cdot \pF_{131}$, where $\pF_{111}$, $ \pF_{121}$, and $\pF_{131}$ are three prime  ideals of $\Z_K$ of residue degree $1$ each.\\
\textbf{C18:}  $(a, b) \in  \{(18, 8), (18, 35), (45, 8), (45, 62), (72, 35), (72, 35)\}\md{81}$. In this case, $\n=3$ and $\mu\ge 3$. It follows by $(\ref{devf1p3})$ that $N_{\ph_1}^{+}(F)=S_{11}+S_{12}$ has two sides of degree $1$ each joining the point $(0, 3), (3, 1)$ and $(9, 0)$ with  $e_{11}=3$,   $e_{12}=6$. By Corollary \ref{corollaryore}(2), $3\Z_K=\pF_{111}^3\cdot \pF_{121}^6$, where $\pF_{111}$, $ \pF_{121}$ are two prime  ideals of $\Z_K$ of residue degree $1$ each.\\
\textbf{C19:} $(a, b) \in \{(18, 62), (99, 224), (180, 143)\}\md{243}$. In this case $\n=4$ and $\mu=3$. It follows that, $N_{\ph_1}^{+}(F)=S_{11}+S_{12}$ has two sides joining the points $(0, 4), (1, 3), (2, 2), (3, 1)$ and $(9, 0)$ with  respective ramification slopes  $\l_{11}=-1$ and $e_{12}=\frac{-1}{6}$ (see FIGURE 6). Further, we have $(1+a+b)_3\equiv 1 \md{3}$ and $(9+a)_3 \equiv 1 \md{3}$. It follows that  $R_{\l_{11}}(F)(y)=1+y+y^2+y^3= (y+1)(y^2+1)$ which is separable in $\F_{\ph_1}[y]$. So, $F(x)$ is $2$-regular. By Theorem \ref{ore}, we see that $3\Z_K= \pF_{111}\cdot\pF_{121}\cdot \pF_{131}^6$ with ramification indices equal $f(\pF_{111}/3)=f(\pF_{131}/3)=1$ and $ f(\pF_{121}/3)=2$.
	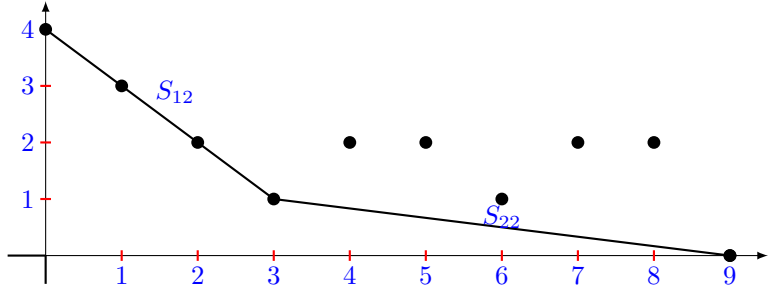
\begin{figure}[htbp] 
	\centering	
	\begin{tikzpicture}[x=1cm,y=0.75cm]
	\draw[latex-latex] (0,4.5) -- (0,0) -- (9.5,0);

	\draw[thick] (0,0) -- (-0.5,0);
	\draw[thick] (0,0) -- (0,-0.5);
	
	\draw[thick,red] (1,-2pt) -- (1,2pt);
	\draw[thick,red] (2,-2pt) -- (2,2pt);
	\draw[thick,red] (3,-2pt) -- (3,2pt);
	\draw[thick,red] (4,-2pt) -- (4,2pt);
	\draw[thick,red] (5,-2pt) -- (5,2pt);
	\draw[thick,red] (6,-2pt) -- (6,2pt);
	\draw[thick,red] (7,-2pt) -- (7,2pt);
	\draw[thick,red] (8,-2pt) -- (8,2pt);
	\draw[thick,red] (9,-2pt) -- (9,2pt);
	\draw[thick,red] (-2pt,1) -- (2pt,1);
	\draw[thick,red] (-2pt,2) -- (2pt,2);
		\draw[thick,red] (-2pt,3) -- (2pt,3);
	\draw[thick,red] (-2pt,4) -- (2pt,4);	
	\node at (1,0) [below ,blue]{\footnotesize  $1$};
	\node at (2,0) [below ,blue]{\footnotesize $2$};
	\node at (3,0) [below ,blue]{\footnotesize  $3$};
	\node at (4,0) [below ,blue]{\footnotesize  $4$};
	\node at (5,0) [below ,blue]{\footnotesize  $5$};
	\node at (6,0) [below ,blue]{\footnotesize  $6$};
	\node at (7,0) [below ,blue]{\footnotesize $7$};
	\node at (8,0) [below ,blue]{\footnotesize  $8$};
		\node at (9,0) [below ,blue]{\footnotesize  $9$};
	\node at (0,1) [left ,blue]{\footnotesize  $1$};
	\node at (0,3) [left ,blue]{\footnotesize  $3$};
	\node at (0,2) [left ,blue]{\footnotesize  $2$};
	\node at (0,4) [left ,blue]{\footnotesize  $4$};
	\draw[thick, mark = *] plot coordinates{(0,4) (1,3) (2,2) (3,1) (9,0)};
	\draw[thick, only marks, mark=*] plot coordinates{ (4,2) (5,2) (6,1) (7,2) (8,2) (9,0)};	
	\node at (1.7,2.5) [above  ,blue]{\footnotesize $S_{12}$};
	\node at (6,0.3) [above   ,blue]{\footnotesize $S_{22}$};
	\end{tikzpicture}
	\caption{   $N_{\ph_1}^{+}$ in Cases \textbf{C19-C22}}
\end{figure}\\
\textbf{C20:} $(a, b) \in \{(45, 35), (126, 197), (207, 116)\}\md{243}$.  Here,  $N_{\ph_1}^{+}$ is the same as  in Case \textbf{C19} (see FIGURE 6). Also, we have $(1+a+b)_3\equiv 1 \md{3}$ and $(9+a)_3 \equiv -1 \md{3}$. Thus, $R_{\l_{11}}(F)(y)=y^3+y^2-y+1$ which is irreducible over $\F_{\ph_1}$. So, Theorem \ref{ore} is applicable. Therefore, $$3\Z_K = \pF_{111}\cdot \pF_{121}^6,$$ where $f(\pF_{111}/3)=3$ and $f(\pF_{121}/3)=1$.\\
\textbf{C21:} $(a, b) \in \{(18, 143), (99, 62), (180, 224)\}\md{243}$. In this case,  $N_{\ph_1}^{+}$ is the same as in Case \textbf{C19} (see FIGURE 6). Further, we have $R_{\l_{11}}(F)(y)=y^3+y^2+y-1$, because $(1+a+b)_3\equiv -1 \md{3}$ and $(9+a)_3 \equiv 1 \md{3}$. So, $F(x)$ is $3$-regular. Hence, the form of the factorization of $3\Z_K$ is $[3, 1^6]$. \\
\textbf{C22:}   $(a, b) \in \{(45, 116), (126, 35), (207, 197)\}\md{243}$. Here,  $N_{\ph_1}^{+}$ is the same as in Case  \textbf{C19} (see FIGURE 6). Further, we have $(1+a+b)_3\equiv -1 \md{3}$ and $(9+a)_3 \equiv -1 \md{3}$.  Thus, $R_{\l_{11}}(F)(y)=y^3+y^2-y-1= (y-1)(y+1)^2$ which is not separable. So, $F(x)$ is not $\ph_1$-regular.  To place ourselves in the regular case, we  replace $\ph_1(x)=x-1$ by $\psi_1(x)=x+b$. Note that $\ol{\ph_1(x)}=\ol{\psi_1(x)}$ and $F(x) \equiv \ol{\psi_1(x)}^9$ in $\F_3[x]$.  Recall  that Ore's Theorem does not depend the monic irreducible liftings of the monic irreducible factors of $F(x)$ modulo the considered prime. Note also that $\n_3(a)= \n_3(1-b^8)=2$. On the other hand, since $\mu=3$, $a_3 \equiv 2  \md{9}$.  The $\psi_1$-adic development of $F(x)$ is 
\begin{equation}\label{devpsi1p=3}
F(x)=9b(-a_3+(1-b^8)_3)+9(a_3+b^8)+ \sum_{j=2}^{9} (-1)^{1+j} \dbinom{9}{j} b^{9-j} \psi_1(x)^j.
\end{equation}
One can check that $(1-b^8)_3 \equiv 5 \md{9}$. It follows that $\n_3(9b(-a_3+(1-b^8)_3))=2$. Thus,  $N_{\psi_1}^{+}(F)= S_{11}+S_{12}$ has two sides of degree $1$ each joining the points $(0, 2), (3, 1)$ and $(0, 9)$. Thus, $F(x)$ is $\psi_1$-regular. Hence it is $3$-regular. Applying Theorem \ref{ore},  the form of the factorization of $3\Z_K$ is $[1^3, 1^6]$. Therefore, $3$ is not a common index divisor of $K$.\\
\textbf{C23:}  $(a, b) \in\{(45, 197), (126, 116),  (207, 35)\}\md{243}$. In this case, $\n \ge 5$ and $\mu=3$. Thus,  $N_{\ph_1}^{+}=S_{11}+S_{12}+S_{13}$ has three sides joining the points $(0, \n), (1,3), (2, 2), (3, 1)$ and $(9, 0)$ , where $d(S_{11})=d(S_{13})=1, e_{11}=1, e_{13}=6$ and $d(S_{12})=2, e_{12}=1$. Further, we have $(9+a)_3=-1 \md{3}$. So,  $R_{\l_{12}}(F)(y)= -1+y+y^2$ which is irreducible over $\F_{\ph_1}$. By Theorem \ref{ore}, we see that $$3\Z_K=\pF_{111}\cdot \pF_{121}\cdot \pF_{131}^6,$$ where $f(\pF_{111}/3)=f(\pF_{131})=1$ and $f(\pF_{121}/3)=2$.\\
\textbf{C24:}  $(a, b) \in \{(18, 224), (99, 143)\} \md{243}$. Let $\psi_1(x)=x+a_3$. Using Binomial Theorem, the $\psi_1$-adic sevlopement of $F(x)$ is 
\begin{equation}\label{devpsi1p=3a_3}
F(x)=-a_3^9-aa_3+b+9a_3(1+a_3^7)+ \sum_{j=2}^{9} (-1)^{1+j} \dbinom{9}{j} a_3^{9-j} \psi_1(x)^j.
\end{equation} Here, we have $\n_3(-a_3^9-aa_3+b)=4$ and $\n_3(9a_3(1+a_3^7))=3$. It follows that $N_{\psi_1}^{+}=S_{11}+S_{12}$ has two sides joining the points $(0, 4), (1, 3), (2, 1), (3, 1)$ and $(9, 0)$. The  side $S_{11}$ has  degree $3$ and the side $S_{12} $ is of  degree $1$. On the other hand, we have $\frac{-a_3^9-aa_3+b}{3^4} \equiv \frac{9a_3(1+a_3^7)}{3^3} \equiv -1 \md{3}$. Thus, $R_{\l_{11}}(F)(y)=-1-y-y^2+y^3$ which is irreducible in $\F_{\psi_1}[y]$. So, $F(x)$ is $\psi_1$-regular. Then it is $3$-regular. By applying Theorem \ref{ore}, $3\Z_K= \pF_{111}\cdot\pF_{121}^6$, where $f(\pF_{111}/3)=3$ and $f(\pF_{121}/3)=1$.  By Lemma \ref{comindex}, $3$ does not divide $i(K)$.  \\
\textbf{C25:} $ a \equiv 18\md{243}$ and $b \equiv 62\md{243}$. Let $\psi_1(x)=x+a_3$ be as in Case \textbf{C24}. According to  the $\psi_1$-adic development $(\ref{devpsi1p=3a_3})$ of $F(x)$, we have $\n_3(-a_3^9-aa_3+b) \ge 5$ and $\n_3(9a_3(1+a_3^7))=3$. Thus, $N_{\psi_1}^{+}=S_{11}+S_{12}+S_{13}$ has three sides joining the points $(0, \n_3(-a_3^9-aa_3+b) ), (1, 3), (2, 2), (3, 1)$ and $(9, 0)$. Also, we have, $d(S_{11})=d(S_{13})=1, d(S_{12}°=2, \frac{9a_3(1+a_3^7)}{3^4} \equiv -1 \md{3}$ and $R_{\l_{12}}(F)(y)=-1-y+y^2$ which is separable. So, Theorem \ref{ore} is applicable. Therefore, the  form of the factorization of $3\Z_K$ is $[1, 2, 1^6]$. \\
	\textbf{C26:}    $(a, b) \in \{(72, 8), (234, 89), (153, 170)\}\md{243}$.  In this case, $\n=4$ and $\mu\ge 4$. It follows that $N_{\ph_1}^{+}=S_{11}+S_{12}$ has two sides joining the points $(0, 4), (2, 2), (3, 1)$ and $(9, 0)$, with $d(S_{11})=3, e_{11}=1, d(S_{12})=1,$ and $e_{12}=6$. Further, we have $R_{\l_{12}}(F)(y)= 1+y^2+y^3=(y-1)(y^2-y-1)$ which is separable  in $\F_{\ph_1}[y]$, since $(1+a+b)_3=1 \md{3}$. Thus, $F(x)$ is $3$-regular. By Theorem \ref{ore}, one has: $3\Z_K =\pF_{111}\cdot\pF_{112}\cdot\pF_{121}^6$ with  residue degrees $f(\pF_{111}/3)=f(\pF_{121}/3)=1$ and $f(\pF_{112}/3)=2$.\\
		\textbf{C27:}   $(a, b) \in \{(153, 8), (72, 89), (234, 170)\}\md{243}$. Here, $N_{\ph_1}^{+}$ is the same as in Case \textbf{C26}. Further, we have $(1+a+b)_3=-1 \md{3}$. Thus,  $R_{\l_{11}}(F)(y)=-1+y^2+y^3$ which is  irreducible  in $\F_{\ph_1}[y]$. Therefore, the form of the factorization of $3\Z_K$ is $[3, 1^6]$.\\
		\textbf{C28:}  $(a, b) \in \{(72, 170), (234, 8)\}\md{243}$. Let $\psi_1(x)=x+a_3$ be as in Case \textbf{C24}. In this case, we have  $\n_3(-a_3^9-aa_3+b) \ge 7$ and $\n_3(9a_3(1+a_3^7))=4$. By $(\ref{devpsi1p=3a_3})$,  $N_{\psi_1}^{+}=S_{11}+S_{12}+S_{13}+S_{14}$ has four  sides of degree $1$ each  joining the points $(0, \n_3(-a_3^9-aa_3+b) ), (1, 4), (2, 2), (3, 1)$ and $(9, 0)$. So, $F(x)$ is $3$-regular.  Using Theorem \ref{ore}, we see that $3\Z_K=\pF_{111}\cdot \pF_{121}\cdot \pF_{131}\cdot \pF_{141}^6, $ where $\pF_{111}, \pF_{121},  \pF_{131}$ and $  \pF_{141}$ are four prime ideals of $\Z_K$ of residue degree $1$ each. Then, we have $L_3(1)=4 > 3 =N_3(1)$. Consequently, by Lemma \ref{comindex}, $3$ divides $i(K)$. \\
	\textbf{C29:}    $ a \equiv 153 \md{243}, b \equiv 8 \md{243}, 2 \mu > \n +2$ and $\n$ is odd. In this case, $N_{\ph_1}^{+}(F)=S_{11}+S_{12}+S_{13}$  has three sides of degree $1$ each joining the points  $(0, \n), (2, 2), (3, 1)$ and $(9, 0)$ with respective slopes $\l_{11}=\frac{2-\n}{2}, \l_{12}=-1$ and $\l_{13}=\frac{-1}{6}$. By Corollary \ref{corollaryore}(1), we obtain that $3\Z_K = \pF_{111}\cdot \pF_{121}\cdot \pF_{131}^6, $ where $\pF_{111},  \pF_{121}$ and $ \pF_{131}$ are three prime ideals of $\Z_K$ of residue degree $1$ each. \\
	\textbf{C30:}    $a \equiv 153 \md{243}, b \equiv 8 \md{243}$ and $ 2 \mu < \n +2$. In this case, $N_{\ph_1}^{+}(F)=S_{11}+S_{12}+S_{13}+S_{14}$  has four sides of degree $1$ each joining the points  $(0, \n), (1, \mu), (2, 2), (3, 1)$ and $(9, 0)$ with respective ramification indices $e_{11}=e_{12}=e_{13}=1$ and $e_{14}=6$. Therefore, the   form of the factorization of $3\Z_K$ is $[1, 1, 1, 1^6]$. Here, we have $L_3(1)=4 > 3= N_3(1)$. Hence, by Lemma \ref{comindex}, $3$ divides $i(K)$. \\
	\textbf{C31:} $a \equiv 153 \md{243}, b \equiv 8 \md{243},  2 \mu > \n +2, \n$ is even,  and $(1+a+b)_3\equiv 1 \md{3}$. In this case, $N_{\ph_1}^{+}(F)=S_{11}+S_{12}+S_{13}$  has three sides   $(0, \n), (2, 2), (3, 1)$ and $(9, 0)$ such that $d(S_{11})=2, e_{11}=1,  d(S_{12})=d(S_{13})=1, e_{12}=1$ and $ e_{13}=6$. Further, we have $R_{\l_{11}}= 1+y^2$ which is separable in $\F_{\ph_1}[y]$. Using Theorem \ref{ore}, we see that $3\Z_K=\pF_{111}\cdot \pF_{121}\cdot \pF_{131}^6,$ with residue degrees  $f(\pF_{111}/3)=1$ and $f(\pF_{122}/3)=f(\pF_{131}/3)=1$.  \\
		\textbf{C32:} $a \equiv 153 \md{243}, b \equiv 8 \md{243},  2 \mu > \n +2, \n$ is even,  and $(1+a+b)_3\equiv -1 \md{3}$. Here, the polygon $N_{\ph_1}^{+}(F)$ is the same as in the case \textbf{C31}. But, $R_{\l_{11}}= -1+y^2=(y-1)(	y+1)$ which is also separable in $\F_{\ph_1}[y]$. By Using Theorem \ref{ore},  the form of the factorization of $3\Z_K$ is $[1, 1, 1, 1^6]$. In this case, $3$ is a prime common index divisor of $K$.\\
	\textbf{C33:} $a \equiv 153 \md{243}, b \equiv 8 \md{243},  2 \mu = \n +2$,  and  $(1+a+b)_3\equiv -1 \md{3}$. In this case, $N_{\ph_1}^{+}(F)=S_{11}+S_{12}+S_{13}$  has three sides   $(0, \n), (1, \mu), (2, 2), (3, 1)$ and $(9, 0)$ such that $d(S_{11})=2, e_{11}=1,  d(S_{12})=d(S_{13})=1, e_{12}=1$ and $ e_{13}=6$. Further, the residual polynomial  $R_{\l_{11}}(F)(y)= y^2-y-1$, because   $(a+9)_3=-1 \md{3}$. So,  $R_{\l_{11}}(F)(y)$ is separable in $\F_{\ph_1}[y]$.  Hence, $F(x)$ is $3$-regular Applying Theorem, the form of the factorization of $3\Z_K$ is $[2, 1, 1^6]$.\\
\textbf{C34:} $a \equiv 153 \md{243}, b \equiv 8 \md{243},  2 \mu = \n +2,$ and   $ (1+a+b)_3\equiv 1 \md{3}.$ This case is similar as the above case. \\
$\bullet$ In the rest of the cases;  from Case \textbf{C35}, we have $a \equiv 0\md{3}$ and $b\equiv 1 \md{9}$. Thus,  $\ol{F(x)}=\ol{\ph_1(x)^9}$ in $\F_3[x]$, where $\ph_1(x)=x+1$. The $\ph_1$-adic development of $F(x)$ is 
\begin{equation}\label{devf1p3b1}
F(x)= -1-a+b+(9+a)\ph_1(x)+ \sum_{j=2}^{9} (-1)^{j+1} \dbinom{9}{j} \ph_1(x)^j.
\end{equation}
Let $\omega=\n_3(-1-a+b)$ and $\mu=\n_3(9+a)$. Then, $N_{\ph_1}^{+}(F)$ is is the lower convex hull of the points $(0, \omega), (1, \mu), 	(2, 2),  (3,1)$ and $(9, 0)$. We proceed analogously as in  Cases \textbf{C14-C34}.  In Cases \textbf{C35-C49}, the polynomial $F(x)$ is $\ph_1$ regular. Then, we apply Theorem \ref{ore} to obtain the factorization of $3\Z_K$.  For Cases  \textbf{C50, C51}, in order to apply Theorem \ref{ore}, we choose $\psi_1(x)=x+b$,  and for Cases  \textbf{C52, C53, C54}, we choose $\psi_1(x)=x-a_3$ as a lifts of $\ol{\ph_1(x)}$. In these cases the polynomial $F(x)$ is $\psi_1$-regular. So, we are able to use  Ore's theorem.  We have collected all cases which satisfy the same polygons and residual polynomials. This completes the proof of the theorem.
\end{proof}
Now, we prove Theorem \ref{p5}.
\begin{proof}[Proof of Theorem \ref{p5}]
 By  \cite[Proposition 2.13]{Na}, for any $\eta \in \Z_K$, we have the following index formula: \begin{equation*}\label{indexdiscrininant}
\n_p(D (\eta)) =  2 \n_p(( \Z_K : \Z [\eta])) + \n_p(D_K),
\end{equation*}
where $D(\eta)$ is the discriminant of the minimal polynomial of $\eta$ and $D_K$ is the discriminant of $K$. By the definition $(\ref{Defi(K)})$ of $i(K)$ and the above relation,   if $p$ divides $i(K)$, then $p^2$  divides $\Delta(F)$. Recall that  the discriminant of $F(x)= x^9+ax+b$ equals
\begin{eqnarray}\label{discriminant}
\Delta(F)=3^{18}b^8+2^{24}a^9.
\end{eqnarray}
On the other hand, by the result of \.{Z}yli\'{n}ski  \cite{Zylinski}, if $p$ divides $i(K)$, then $p<9$;  see also  \cite{Engstrom}). Therefore, the condidate prime integers to be a common index divisor of $K$ are $2, 3, 5$ and $7$. So, it suffice to prove Theorem \ref{p5} for $p=5$ and $p=7$.    \\	
Let us show that $5$ does not divide $i(K)$. If $5$ divides $i(K)$, then  $5$ divides $\Delta(F)$. By  Formula $(\ref{discriminant}),$ we see that 
$$(a, b) \in \{(0, 0), (1, 1), (1, 2), (1, 3), (1,4)\} \md{5}.$$
 \begin{enumerate}[label=(\alph*)]
	\item If $a$ and $b$ are both divisible by $5$, then $\ol{F(x)}= \ol{\ph_1(x)}^9$  in $\F_5[x]$, where $\ph_1(x)=1$.  By Hypothesis $(\ref{Hypothese})$, two cases arise: \\
	\textbf{Case 1:} $\n_5(a) < \frac{8}{9} \n_5(b)$ and $\n_2(a) \in \{1,\ldots, 7\}$. In this case, $N_{\ph_1}^{+}(F)= S_{11}+S_{12}$ has two sides, where $d(S_{11})=1, e_{11}=1$ and $d(S_{12}) \in \{2, 4\}$.\\ If $\n_5(a) \in \{1, 3, 5, 7\}$, then $d(S_{11})=1$ and $e_{12}=1$   Thus,  for $k=1,2$, $R_{\l_{1k}}(F)(y)$ is irreducible as it is of degree $1$. Thus, $F(x)$ is $2$-regular. Applying Theorem \ref{ore}, we see that $2\Z_K= \pF_{111}\cdot \pF_{121}^8$ with $f(\pF_{111}/5)=f(\pF_{121}/5)=1$. \\ If $\n_5(a) \in \{2, 6\}$, then $d(S_{12})=2$ and $e_{12}=4$. It follows that  $5\Z_K= \pF_{111}\cdot \aF^4$ where $\aF$ is a non-zero ideal of $\Z_K$. By the Fundamental Equality $(\ref{FE})$, $L_5(1) \le 3$. So, by  Lemma \ref{comindex}, $5$ does not divide $i(K)$.\\ Similarly, if $\n_5(a)=4$, then $d(S_{12})=4$ and $e_{12}=2$. Thus,   $5\Z_K= \pF_{111}\cdot \aF^2,$ where $\aF$ is a non-zero ideal of $\Z_K$. So, $L_5(1) \le 5$. Hence, $5$ does not divide $i(K)$.\\
	\textbf{Case 2:} $\n_5(a) > \frac{8}{9} \n_5(b)$ and $\n_2(b) \in \{1,\ldots, 8\}$. In this case, $N_{\ph_1}^{+}(F)= S_{11}$ with $d(S_{11}) \in \{1,3\}$. If $\n_5(b) \in \{1, 2, 4, 5, 7, 8\}$, then $d(S_{11})=1$ and $e_{11}=9$. By Corollary \ref{corollaryore}(2), $5\Z_K= \pF_{111}^9$ with $f(\pF_{111}/5)=1$. If $\n_5(b) \in \{3, 6\}$, then $d(S_{11})=3$ and $e_{11}=3$. It follows that $5\Z_K = \aF^3$ for some non-zero ideal of $\Z_K$. By the Fundamental Equality, it is impossible to see that  $L_5(1) \ge 6$. So, $5$ does not divide $i(K)$. 
	\item  Now, assume $(a, b) \in \{ (1, 1), (1, 2), (1, 3), (1,4)\} \md{5}.$   Reducing $F(x)$ modulo $5$, we see that $\ol{F(x)}=\ol{\ph_1(x)^2\ph_2(x)}$ in $\F_5[x],$ with $\deg(\ph_1(x))=1$ and $\deg(\ph_2(x))=7$. By applying Theorem \ref{ore} and using the Fundamental Equality (\cite[Theorem 4.8.5]{Co}), the form of the factorization  $5\Z_K$ is    $[1, 1, 7], [1^2, 7]$ or $ [2, 7]\}.$  By Lemma \ref{comindex}, $5$ does not divide $i(K)$.	
\end{enumerate}
We conclude that in every case, $5$ is not a common index divisor of $K$.
We use the same idea to prove that $7$ does not divide $i(K)$. Consequently, the rational primes which can divide $i(K)$ are $2$ and $3$.
\end{proof}
To illustrate our results, we give some numerical examples. 
\begin{examples}
	Let $\th$ be a root of a monic irreducible polynomial  $F(x)=x^9+ax+b \in \Z[x]$ and  $K=\Q(\th)$.
	\begin{enumerate}
		\item If $F(x)= x^9+289x+34$, then it is irreducible 	over  $\Q$ as  it is  a $3$-Eisenstein polynomial.  By Case $A4$ of Theorem \ref{p=2}, $2$ divides $i(K)$. So, $K$ is not monogenic. 
			\item If $F(x)= x^9+28\times 33^k x+352 \times 3^h,$ where $h$ and $k$ are two positive integers, the it is irreducible over  $\Q$ as it is  a $11$-Eisenstein polynomial.  By Case $A15$ of Theorem \ref{p=2}, $2$ divides $i(K)$. Hence, $K$ is not monogenic. 
			\item Let $p \ge 5$ be a rational prime and $F(x)=x^9+p^kx+p,$ where $k$ is a positive integer.  Since $F(x)$ is an Eisenstein polynomial with respect $p$, then it is irreducible over the fields of rationals. By Theorems \ref{p=2}, \ref{p=3}, and \ref{p5}, the index of $K$ is trivial; $i(K)=1$.
	\end{enumerate}
\end{examples}


\begin{thebibliography}{99}	
	\bibitem{Alacawiliamsbook} S. Alaca and K.S. Williams,  Introduction Algebraic Number Theory,  Cambridge University Press (New York, 2004).
		\bibitem{AN}  S. Ahmad, T. Nakahara and A. Hameed, On certain pure sextic fields related to a problem of Hasse, \textit{Int. J. Algebra  Comput} \textbf{26}(3) (2016), 577--583.
	\bibitem{ANHN} S. Ahmad, T, Nakahara and S.M. Husnine, Power integral bases for certain  pure sextic fields, \textit{Int. J. Number Theory} \textbf{10}(8) (2014), 2257--2265.

	\bibitem{Barxiv} H. Ben Yakkou,  On common index divisors and non  monogenity   number fields defined by  $x^{5}+ax^3+b$ (2022), arXiv:2212.05029. 
	\bibitem{BRM} H. Ben Yakkou,  On non-monogenic  number fields defined by trinomial of type $x^n+ax^m+b$   (2022) 	(to appear in 	forthcoming  issue of  Rocky Mountain J. Math), 	arXiv:2203.07625.

	\bibitem{BATCA} H. Ben Yakkou,  On monogenity of certain number fields defined by trinomial of type $x^8+ax+b$,  \textit{Acta Math. Hungar.}  \textbf{166} (2022), 614--623. 
	\bibitem{BFT1} H. Ben Yakkou and L. El Fadil, On monogenity of certain number fields defined by trinomials, \textit{ Funct. Approx. Comment. Math.} \textbf{67} (2) (December 2022) 199--221.
	
	
	\bibitem{Gacta2}  Y. Bilu, I. Ga\'{a}l and K. Gy\H{o}ry, Index form equations in sextic fields: a hard computation, \textit{Acta Arithmetica} \textbf{	115}(1)  (2004), 85--96.
	\bibitem{Co} H. Cohen,  A Course in Computational Algebraic Number Theory,  GTM 138, (Springer-Verlag, Berlin  Heidelberg, 1993).
	\bibitem{R} R. Dedekind, \"Uber den Zusammenhang zwischen der Theorie der Ideale und der Theorie der h\"oheren Kongruenzen, \textit{G\"ottingen Abhandlungen} \textbf{23} (1878), 1--23.
	\bibitem{DS}  C. T. Davis and B. K.  Spearman, The index of quartic field defined by a trinomial $x^4+ax+b$, \textit{J. Algebra  its Appl.}  \textbf{17}(10)  (2018), 1850197.
	
	
	
	%
	\bibitem{FKcom} L. El Fadil and O. Kchit, On index divisors and monogenity of certain septic number fields defined by$x^7 + ax^3 + b$, Commun. Algebra (2022), 1-15.
	\bibitem{EMN}  L. El Fadil, J. Montes and E. Nart,  Newton polygons and $p$-integral bases of quartic number fields \textit{J. Algebra its Appl.} \textbf{ 11}(4) (2012), 1250073.
	
	\bibitem{Engstrom} H. T. Engstrom, On the common index divisors of an algebraic number fields, \textit{Amer. Math. Soc.} \textbf{32}(2) (1930), 223--237.
	\bibitem{EG} 	J. H.  Evertse and K.  Gy\H{o}ry,  Discriminant equations in diophantine number theory, Cambridge Univ. Press (2017).
	\bibitem{G19} I. Ga\'al, Diophantine equations and power integral bases, Theory and algorithm, 2nd edn.,  Birkh\"auser (Boston, 2019).
	\bibitem{Ga21}    I. Ga\'{a}l, An experiment on the monogenity of a family of trinomials, \textit{JP  J.  Algebra  Num.  Theory  Appl.}  \textbf{51}(1) (2021), 97--111.
	\bibitem{Gacta1} I. Ga\'{a}l and  K. Gy\H{o}ry, Index form equations in quintic fields, \textit{Acta Arithmetica}  \textbf{89}(4) (1999), 379-396.
	\bibitem{GP} I. Ga\'al, A. Peth\H{o} and M. Pohst, On the resolution of index form equations in quartic number fields, \textit{J. Symbolic Comput.} \textbf{16}(1993), 563--584.
	\bibitem{GRN} I. Ga\'{a}l and L. Remete, Non-monogenity in a family of octic fields, \textit{Rocky Mountain J. Math.} \textbf{47}(3) (2017), 817--824.
	\bibitem{GR17}  I. Ga\'al and L. Remete, Power integral bases and monogenity of pure  fields, \textit{J.  Num. Theory.} \textbf{173} (2017), 129--146.
	
	\bibitem{Nar}  J. Gu\`{a}rdia, J. Montes and E. Nart, Newton polygons of higher order in algebraic number theory, \textit{Trans. Amer. Math. Soc.} \textbf{364}(1) ( 2012), 361--416.
	\bibitem{Narprime}  J. Gu\`{a}rdia, J. Montes and E. Nart,  Higher Newton polygons in the computation of discriminants and prime ideal decomposition in number fields, {\it J. Théor. Nombres Bordx.} \textbf{23}(7) (2021), 667--669.
	
	
	
	
	
	
	\bibitem{Gyoryrsurlespolynomes} K.Gy\H{o}ry, Sur les polyn\^{o}mes \'{a} coefficients entiers et de discriminant donn\'{e}, \textit{Acta Arithmetica}   \textbf{23}(4) (1973), 419--426.
	\bibitem{Gyoryredecide} K.Gy\H{o}ry,	Sur les polynomes a coefficients entiers et de discriminant donne III, {\it Publ. Math. Debrecen. } 23 (1976), 141--165.
	\bibitem{Gyoryrelative} K.Gy\H{o}ry, On polynomials with integer coefficients and given discriminant, IV, \textit{Publ. Math.  Debrecen.} \textbf{25} (1978), 155--167.
	\bibitem{Gyoryrdiscriminant} K.Gy\H{o}ry,  On discriminants and indices of integers of an algebraic number field, \textit{J.Reine Angew. Math.} \textbf{324}(1981), 114--126.
	
	\bibitem{Gyoryr1983} K.Gy\H{o}ry, Bounds for the solutions of norm form, discriminant form
	and index form equations in finitely generated integral domains, \textit{Acta
		Math. Hungar.} \textbf{42} (1983), 45-80.
	\bibitem{JW} { L. Jones and D. White, Monogenic trinomials with non-squarefree discriminant, \textit{International Journal of Mathematics} \textbf{32}(13) (2021).}
	\bibitem{JP}L. Jones and T. Phillips, Infinite families of monogenic trinomials and their Galois
	groups, \textit{ Int. J. Math.}  \textbf{29}(5) (2018). Article ID 1850039, 11p.
	\bibitem{MN92}  J. Montes and E. Nart, On theorem of Ore, \textit{Journal of Algebra } \textbf{146}(2) (1992), 318--334. 
	
	\bibitem{Na} W. Narkiewicz, Elementary and analytic theory of algebraic numbers, 3rd edn., Springer Monographs in Mathematics (Springer-Verlag, Berlin, 2004).
	\bibitem{O} \O. Ore, Newtonsche Polygone in der Theorie der algebraischen K\"{o}rper, \textit{ Math. Ann.} \textbf{99} (1928), 84--117.
	\bibitem{PP} A. Peth\H{o} and M. Pohst, On the Indices of multiquadratic number fields, \textit{ Acta Arithmetica} \textbf{153}(4) (2012), 393--414.
	\bibitem{PethoZigler} A. Peth\H{o}, V. Ziegler,  On biquadratic fields that admit unit power integral basis, \textit{Acta Math. Hungar.} 133,  (2011) 221--241.
\bibitem{Serre} J.-P. Serre, A course in Arithmetic, 	GTM 7 (Springer-Verlag, New York, 1996).
	\bibitem{Smtacta} H. Smith, The monogenity of radical extension, \textit{Acta Arithmitica} \textbf{198}(2021), 313--327.   
	
	\bibitem{Zylinski}E. \.{Z}yli\'{n}ski,  Zur Theorie der au\ss{}erwesentlichen diskriminantenteiler algebraischer k\"{o}rper, \textit{Math. Ann.} \textbf{(73)} (1913), 273--274.

	
\end{thebibliography}
\end{document}